\newtheorem{definition}{Definition}[section]
\newtheorem{theorem}[definition]{Theorem}
\newtheorem{lemma}[definition]{Lemma}
\newtheorem{corollary}[definition]{Corollary}
\newtheorem{proposition}[definition]{Proposition}
\theoremstyle{definition}
\newcommand\style{\mathcal }          
\newcommand{\M}{\style{M}}
\newcommand\A{{\style A}}
\renewcommand{\H}{\style{H}}
\newcommand\cstar{{\rm C}^*}                              
\newcommand\tr{ \operatorname{Tr} }
\def\mytimeA#1
           \def\tapm{ a.m.}%
           \def\tapm{ p.m.}%
\newcolumntype{?}{!{\vrule width 1pt}}
\newcommand{\im}{\rm{i}}
\begin{document}
\title[Universality of Weyl Unitaries]{Universality of Weyl Unitaries}
\author[D.~Farenick]{Douglas Farenick \textsuperscript{1}}
\author[O.~Ojo]{Oluwatobi Ruth Ojo\textsuperscript{1}}
\author[S.~Plosker]{Sarah Plosker \textsuperscript{2,1}}

\thanks{\textsuperscript{1}Department of Mathematics and Statistics, University of Regina, Regina SK S4S 0A2, Canada}
\thanks{\textsuperscript{2}Department of Mathematics and Computer Science, Brandon University,
Brandon, MB R7A 6A9, Canada}

\date{ \today}

\begin{abstract} Weyl's unitary matrices, which were introduced in Weyl's 1927 
paper \cite{weyl1927} on group theory and quantum mechanics, 
are $p\times p$ unitary matrices given by the diagonal matrix whose entries 
are the $p$-th roots of unity and the cyclic shift matrix.
Weyl's unitaries, which we denote by $\mathfrak u$ and $\mathfrak v$, 
satisfy $\mathfrak u^p=\mathfrak v^p=1_p$ (the $p\times p$ identity matrix) and 
the commutation relation $\mathfrak u\mathfrak v=\zeta \mathfrak v\mathfrak u$, where $\zeta$ is a 
primitive $p$-th root of unity.  We prove that Weyl's unitary matrices are universal 
in the following sense: if $u$ and $v$ are
any $d\times d$ unitary matrices such that $u^p= v^p=1_d$ and 
$ u  v=\zeta vu$, then there exists a unital
completely positive linear map 
$\phi:\M_p(\mathbb C)\rightarrow\M_d(\mathbb C)$ such that $\phi(\mathfrak u)= u$ and 
$\phi(\mathfrak v)=v$. We also show, moreover, that any two pairs of $p$-th order 
unitary matrices that satisfy the Weyl commutation relation are completely order equivalent.

When $p=2$, the Weyl matrices are two of the three Pauli matrices from quantum mechanics. 
It was recently shown in \cite{farenick--huntinghawk--masanika--plosker2021} that 
$g$-tuples of Pauli-Weyl-Brauer unitaries are universal for all $g$-tuples of anticommuting 
selfadjoint unitary matrices; however, we show here that the analogous 
result fails for positive integers $p>2$.

Finally, we show that the Weyl matrices are extremal in their matrix range, 
using recent ideas from noncommutative convexity theory.
\end{abstract}

\keywords{Weyl unitary, Pauli matrix, operator system, complete order equivalence, matrix range, extreme points}
\subjclass[2020]{15A30, 15B57, 46L07, 47A13, 47L05} 

\maketitle
\section{Introduction}

With respect to a positive integer $p\geq2$ and a primitive $p$-th root of unity $\zeta$, 
a pair of $d\times d$ unitary matrices $u$ and $v$ satisfy the
\emph{Weyl relations} if 
\begin{equation}\label{comm rel}
u^p=v^p=1_d \mbox{ (the }d\times d\mbox{ identity matrix) and } uv=\zeta vu. 
\end{equation}
The relation $uv=\zeta vu$ is referred to as a \emph{Weyl commutation relation}.
The most immediate example of unitary matrices satisfying the Weyl relations comes from 
H.~Weyl's 1927 paper on quantum mechanics and 
group theory \cite[p.~32]{weyl1927}, in which $d=p$ and $u$ and $v$ are the 
$p\times p$ unitary matrices denoted herein
by $\mathfrak u$ and $\mathfrak v$, respectively, and are defined by
\begin{equation}\label{e:weyl_intro}
\mathfrak u=
\left[ \begin{array}{ccccc} 1 &&&& \\ & \zeta   &&& \\ && \zeta^2 && \\ &&&\ddots & \\ &&&& \zeta^{p-1} \end{array} \right] \mbox{ and }
\mathfrak v=
\left[ \begin{array}{ccccc} 0 &&&& 1 \\ 1 & 0 &&& \\ &1& 0 && \\ &&\ddots&\ddots & \\ &&&1& 0 \end{array} \right].
\end{equation}
In the case where $p=2$, the Weyl unitaries
$\mathfrak u$ and $\mathfrak v$ are two (namely, $\sigma_Z$ and $\sigma_X$) of the three Pauli matrices: 
\begin{eqnarray*}
\sigma_X=\begin{bmatrix}0&1\\1&0\end{bmatrix},\quad 
\sigma_Y=\begin{bmatrix}0&-\im\\\im&0\end{bmatrix}, \quad
\sigma_Z=\begin{bmatrix}1&0\\0&-1\end{bmatrix}.
\end{eqnarray*}
For this reason, the Weyl unitaries are often viewed as a generalised form of the Pauli matrices. 
(In particular, Weyl's paper \cite{weyl1927} introduces his $p\times p$ unitaries after a fulsome discussion
of the Pauli matrices.)

Weyl's unitary matrices $\mathfrak u$ and $\mathfrak v$ 
in (\ref{e:weyl_intro}) are not just examples of unitary matrices satisfying the relations (\ref{comm rel});
they are, in fact, universal matrices for all such pairs of unitary matrices. 
That is, as we shall prove herein, if $u$ and $v$ are $d\times d$ unitary matrices 
such that $u^p=v^p=1_d$,  
and $u v=\zeta vu$, 
then there exists a unital completely positive linear map 
$\Phi:\M_p(\mathbb C)\rightarrow\M_d(\mathbb C)$ such that $\Phi(\mathfrak u)= u$
and $\Phi(\mathfrak v)= v$, where $\M_n(\mathbb C)$ denotes the algebra of complex $n\times n$ matrices.

Returning to the case $p=2$, the Weyl matrices $\mathfrak u$ and $\mathfrak v$ are the Pauli matrices
$\sigma_Z$ and $\sigma_X$, respectively. The third Pauli matrix, $\sigma_Y$, is obtained from the other two 
via the equation $\sigma_Y=i\sigma_X\sigma_Z$. Any unitary
$u\in\M_d(\mathbb C)$ for which $u^2=1_d$ must be selfadjoint; and the condition $uv=\zeta vu$ is equivalent, in the case $p=2$, to $uv=-vu$. Therefore, the 
relations (\ref{comm rel}) for pairs of unitaries extend easily to $g$-tuples of unitaries, thereby describing
$g$-tuples of anticommuting selfadjoint unitary matrices.

However, for $p>2$,
extending the relations in
(\ref{comm rel}) from two unitaries to a higher number of unitaries, say $g$, 
has a number of additional considerations. 
The case of $g=2$ illustrates the situation already: suppose that $u$ and $v$ unitaries that
satisfy the relations in (\ref{comm rel}). The commutation relation $uv=\zeta vu$ implies that $vu=\zeta^{-1} uv$;
it is only in the case where $p=2$ that $\zeta^{-1} =\zeta$, and so one is required, for $p>2$, 
to account for the change in the scalar if one swaps the order of the unitary product.

If $\zeta$ is a primitive $p$-th root of unity, then $\zeta^k$ is also a primitive $p$-th root of unity
for all $k\in\mathbb N$ that are not divisible by $p$---or, stated more conveniently, for any nonzero $k\in\mathbb Z_p$.
Thus, one might have commutation relations for unitaries that involve $\zeta$ and some its powers.

Such considerations lead to the following definition.

\begin{definition}\label{comm rel many}
Suppose that $\zeta\in\mathbb C$ is a primitive $p$-th root of unity.
The \emph{Weyl commutation relations} for a $g$-tuple $u=(u_1,\dots,u_g)$ of $d\times d$
unitary matrices that satisfy $u_k^p=1_d$, for each $k$,
are given by the equations
\begin{equation}\label{e:comm rel many}
 u_ku_{\ell}=\zeta^{c_{k\ell}}u_\ell u_k, \mbox{ for all } k,\ell\in\{1,\dots,g\},
\end{equation}
for some skew-symmetric matrix $C=[c_{k\ell}]_{k,\ell=1}^g\in\M_g(\mathbb Z_p)$.
If $c_{k\ell}=1$ whenever $k<\ell$, then the commutation relations in (\ref{e:comm rel many}) are said to be \emph{simple}.
\end{definition}

Motivated by the fact that the Pauli matrix $\sigma_Y$ is a scalar 
multiple of the product $\sigma_X\sigma_Z$, one may mimic the construction by considering $w=\lambda uv$,
where $\lambda\in\mathbb C$ and $u$ and $v$ are $d\times d$ unitary 
matrices such that $u^p=v^p=1_d$ and $uv=\zeta vu$.
The condition $w^p=1$
imposes a requirement upon $\lambda$. Indeed, as $vu=(1/\zeta)uv$,
\[
\begin{array}{rcl}
(uv)^p&=&u(vu)^{p-1}v=(1/\zeta)^{p-1}u^2(vu)^{p-2}v^2 \\
         &=& (1/\zeta)^{p-1}(1/\zeta)^{p-2}u^3(vu)^{p-3}v^3 \\
         &=& \cdots \\
         &=& (1/\zeta)^{1+2+\cdots+(p-1)}u^pv^p = (1/\zeta)^{\frac{(p-1)p}{2}}\,1.
\end{array}
\]
Thus, the condition $w^p=1$ implies that $\lambda^p=\zeta^{\frac{(p-1)p}{2}}$, and 
so $\lambda$ is uniquely determined: $\lambda=\zeta^{\frac{p-1}{2}}$.
In this case, we have the following commutation relations (in addition to $uv=\zeta vu$):
\[
uw=\zeta wu \mbox{ and } vw= \zeta^{-1} wv.
\]
Thus, if $(u_1,u_2,u_3)$ were given by $(u,v,w)$, then the matrix $C$ for the 
Weyl commutation relations would be
\[
C=\left[ \begin{array}{ccc} 0&1&1 \\ -1&0&-1 \\ -1&1&0 \end{array}\right].
\]
On the other hand, the Weyl commutation relations for $(u_1,u_2,u_3)=(u,w,v)$ are simple, as in this case the matrix $C$ is
\[
C=\left[ \begin{array}{ccc} 0&1&1 \\ -1&0&1 \\ -1&-1&0 \end{array}\right].
\]

\begin{definition} If $p\geq 3$, then 
the \emph{simple Weyl unitary matrices} are the three $p\times p$ unitary matrices
$\omega_a$, $\omega_b$, and $\omega_c$ defined by
\[
\omega_a=\mathfrak u,\quad \omega_b=\zeta^{\frac{p-1}{2}}\mathfrak u\mathfrak v,\quad \omega_c=\mathfrak v.
\]
The triple
$\mathfrak W=(\omega_a,\omega_b,\omega_c)$ is called the \emph{simple Weyl triple}.
\end{definition}

In this paper, we establish a result that is stronger than the assertion 
concerning the universality of the Weyl matrices $\mathfrak u$ and $\mathfrak v$. Specifically, we
show that any two unitary matrices with the relations (\ref{comm rel}) are universal -- not just the
Weyl pair. This result is best phrased in terms of complete order equivalence, which is essentially a special
form of isomorphism in the operator system category (see \cite{Paulsen-book} for an overview of the theory of
operator systems and completely positive linear maps).

To explain complete order equivalence, suppose that $x=(x_1,\dots,x_g)$ is a $g$-tuple of 
complex $d\times d$ matrices $x_k$. The \emph{operator system of $x$} is the
linear space $\mathcal O_x$ of $d\times d$ matrices defined by
\[
\mathcal O_x=\mbox{\rm Span}_{\mathbb C}\{1_d,x_k,x_k^*\,|\,k=1,\dots,g\}.
\]
If $x=(x_1,\dots,x_g)$ and $y=(y_1,\dots,y_g)$ are $g$-tuples of 
matrices $x_j\in\M_{d_1}(\mathbb C)$ and $y_\ell\in M_{d_2}(\mathbb C)$,
then $x$ and $y$ are said to be \emph{completely order equivalent}, 
denoted by $x\simeq_{\rm ord} y$, if there exists a linear
isomorphism $\phi:\mathcal O_x\rightarrow\mathcal O_y$ such that
\begin{enumerate}
\item[{a)}] $\phi$ and $\phi^{-1}$ are unital completely positive (ucp) linear maps, and
\item[{b)}] $\phi(u_k)=y_k$, for each $k=1,\dots, g$.
\end{enumerate}

If $a=(a_1,\dots,a_g)$ is a $g$-tuple of $d\times d$ matrices, and 
if $\gamma: \mathbb C^n\rightarrow \mathbb C^d$ is a linear transformation,
then $\gamma ^* a \gamma$ denotes the $g$-tuple of $n\times n$ matrices given by
\[
\gamma ^* a \gamma= (\gamma^*a_1\gamma,\dots,\gamma^*a_g\gamma).
\]
If $n=d$ and if $b=\gamma^* a \gamma$ for a unitary $\gamma$, then this situation is denoted by 
\[
a\simeq_{\rm u} b
\]
and we say that  $b$ is \emph{unitarily equivalent} to $a$.

While it is clear that
\[
x\simeq_{\rm u} y \Longrightarrow x\simeq_{\rm ord} y,
\]
the converse does not hold in general, for in the complete order equivalence problem for  
$g$-tuples $x$ and $y$ of matrices,
the matrix dimensions $d_1$ and $d_2$ need not be equal. Hence, complete order equivalence
is weaker than unitary equivalence. 

An answer to the question of when two operator $g$-tuples $x$ and $y$ are 
completely order equivalent has been given 
by Davidson, Dor-On, Shalit, and Solel \cite{davisdon--dor-on--shalit--solel2017}: 
$x\simeq_{\rm ord} y$ if and only if $W(x)=W(y)$, where,
for a $g$-tuple $z=(z_1,\dots,z_g)$ of operators $z_k$, $W(z)$ 
denotes the \emph{matrix range of $z$}, which is the sequence
\[
W(z)=\left( W^n(z)\right)_{n\in\mathbb N}
\]
of subsets $W^n(z)$ in $\M_n(\mathbb C)\times \cdots \times \M_n(\mathbb C)$ ($g$ copies) defined by
\begin{equation}\label{e:n-mr}
W^n(z)=\left\{ \left(\psi(z_1),\dots,\psi(z_g)\right)\,|\,\psi:\mathcal O_z\rightarrow\M_n(\mathbb C)\mbox{ is a ucp map}\right\}.
\end{equation}
This result of Davidson, Dor-On, Shalit, and Solel is especially 
useful in cases where the matrix ranges can be described; however, such
descriptions are not always available.

We shall find it convenient to use the following definition. 

\begin{definition} A unitary matrix $u$ is of \emph{$p$-th order} if $u^p$ is the identity matrix.
\end{definition}

Lastly, some notation:
if $\mathcal S$ is a nonempty subset of $\M_d(\mathbb C)$, then $\mbox{Alg}(\mathfrak S)$ denotes the complex associative subalgebra of 
$\M_d(\mathbb C)$ generated by $\mathcal S$. In particular, it is a classical result that $\M_p(\mathbb C)=\mbox{Alg}(\mathcal S)$, 
if $\mathcal S$ is the set consisting of the two Weyl unitary matrices 
$\mathfrak u$ and $\mathfrak v$ \cite{schwinger1960}.

\section{Universality of the Weyl Unitary Matrices $\mathfrak u$ and $\mathfrak v$}

The eigenvalues of the Weyl unitary matrices $\mathfrak u$ and $\mathfrak v$ are 
precisely the $p$-roots of unity, which is the maximal spectrum of any $p$-th order unitary matrix.
It so happens that this spectral property is a consequence of the Weyl commutation relation.

\begin{lemma}\label{trace and spectrum} 
If $u$ and $v$ are $p$-th order $d\times d$ unitary matrices in which $uv=\zeta vu$, then
\begin{enumerate}
\item $p$ divides $d$,
\item $\tr(u)=\tr(v)=\tr(uv)=0$, and
\item every $p$-root of unity is an eigenvalue of $u$ and an eigenvalue of $v$.
\end{enumerate}
\end{lemma}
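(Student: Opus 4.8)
The plan is to exploit the Weyl commutation relation $uv=\zeta vu$ to force heavy cancellation in traces, and then deduce the spectral and divisibility statements. First I would prove part (2). Since $u$ is unitary with $u^p=1_d$, conjugating the relation $uv=\zeta vu$ by $v$ (which is invertible) gives $v^{-1}uv = \zeta u$, so $u$ is similar to $\zeta u$. Taking traces yields $\tr(u) = \zeta \tr(u)$, and since $\zeta \neq 1$ this forces $\tr(u)=0$. Symmetrically, conjugating by $u$ gives $u^{-1}vu = \zeta^{-1} v$ (from $vu = \zeta^{-1}uv$), so $v$ is similar to $\zeta^{-1}v$ and $\tr(v)=0$. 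For $\tr(uv)=0$, I would observe that $u(uv)u^{-1} = u^2 v u^{-1} = u \cdot uvu^{-1} = u \cdot \zeta^{-1} v \cdot \ ?$ — more cleanly, use $v(uv)v^{-1} = vuv \cdot v^{-1}\cdot v \cdot v^{-1}$; the efficient route is to note $w := uv$ satisfies $uw = u^2v = \zeta^{-1} \cdot ?$; in fact from $uv = \zeta vu$ one computes $u(uv) = \zeta (uv) u \cdot \zeta^{\mathstrut}$? Let me instead simply use: $v (uv) v^{-1} = (vuv^{-1}) (vvv^{-1})$? The reliable computation is $u^{-1}(uv)u = v u = \zeta^{-1} uv$, so $uv$ is similar to $\zeta^{-1}(uv)$, whence $\tr(uv)=0$. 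Thus all three traces vanish, and the same argument applied to any product $u^j v^k$ with $(j,k)\not\equiv(0,0)$ shows it is similar to a nontrivial root-of-unity multiple of itself, so $\tr(u^jv^k)=0$ for all such $(j,k)$.

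Next I would deduce part (3). The matrix $u$ has finite order $p$, hence is diagonalizable with eigenvalues among the $p$-th roots of unity $\{1,\zeta,\dots,\zeta^{p-1}\}$; let $m_0,\dots,m_{p-1}\ge 0$ be the multiplicities, so $\sum_j m_j = d$. The vanishing of $\tr(u^k) = \sum_j m_j \zeta^{jk}$ for $k=1,\dots,p-1$ (from part (2) applied to the $p$-th order unitary $u^k$, which still satisfies a Weyl relation with $v$ since $u^kv = \zeta^k v u^k$ and $\zeta^k$ is primitive when $p\nmid k$), together with $\sum_j m_j \zeta^{j\cdot 0} = d$, gives a linear system: the vector $(m_0,\dots,m_{p-1})$ times the (invertible) character matrix $[\zeta^{jk}]_{j,k}$ equals $(d,0,\dots,0)$. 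Inverting the discrete Fourier transform yields $m_j = d/p$ for every $j$. In particular every $p$-th root of unity occurs as an eigenvalue of $u$ (with equal multiplicity $d/p$), and by symmetry the same holds for $v$. This simultaneously gives part (1), since $m_j = d/p$ must be a nonnegative integer, so $p \mid d$.

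The main obstacle — really the only delicate point — is making sure that for each $k$ with $1\le k\le p-1$ the pair $(u^k, v)$ still falls under the hypothesis of part (2), i.e. that $u^k$ is a $p$-th order unitary and that $u^k v = \zeta^{k} v u^k$ with $\zeta^k$ still a primitive $p$-th root of unity. The order claim is immediate since $(u^k)^p = (u^p)^k = 1_d$; the commutation relation follows by an easy induction from $uv=\zeta vu$; and $\zeta^k$ is primitive precisely because $p$ is such that $\gcd(k,p)=1$ fails in general — so here I must be careful: $\zeta^k$ need not be primitive when $p$ is composite and $\gcd(k,p)>1$. However, all I actually need is $\zeta^k \neq 1$, which holds for $1\le k\le p-1$ regardless, and that is enough to run the trace-vanishing argument $\tr(u^k) = \zeta^k \tr(u^k) \Rightarrow \tr(u^k)=0$. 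So the Fourier-inversion step goes through using only $\zeta^k\ne 1$ for $k\not\equiv 0$, and no primitivity of the powers is needed. I would present the argument in the order: (2) first via the similarity trick, then the Fourier/character-sum computation giving the eigenvalue multiplicities, and read off (1) and (3) as consequences.
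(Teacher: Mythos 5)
Your proposal is correct, and part (2) coincides with the paper's argument (conjugate to get $v^*uv=\zeta u$, take traces, conclude $\tr(u)=\zeta\tr(u)=0$, and similarly for $v$ and $uv$). Where you genuinely diverge is in parts (1) and (3). The paper proves (1) by taking determinants of $uv=\zeta vu$ to get $\zeta^d=1$ and invoking primitivity of $\zeta$, and proves (3) by observing that the similarity $v^*uv=\zeta u$ forces the spectrum of $u$ to be closed under multiplication by $\zeta$, hence to be the full set of $p$-th roots of unity. You instead push the trace argument further: $u^kv=\zeta^kvu^k$ with $\zeta^k\neq1$ gives $\tr(u^k)=0$ for $k=1,\dots,p-1$, and inverting the character (DFT) matrix yields the eigenvalue multiplicities $m_j=d/p$ exactly, from which both (1) and (3) drop out. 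Your side remark that only $\zeta^k\neq1$ (not primitivity of $\zeta^k$) is needed is correct and worth keeping. Your route is slightly longer for this lemma alone, but it buys strictly more: the equal-multiplicity statement $m_j=d/p$, which the paper needs in the very next lemma and there justifies only from ``every root is an eigenvalue and $\tr(u)=0$'' --- a deduction that is not valid on its own for composite $p$ (e.g.\ for $p=4$ the multiplicity vector $(2,1,2,1)$ also has vanishing trace), whereas your full set of conditions $\tr(u^k)=0$, $k=1,\dots,p-1$, does force equal multiplicities. The one cosmetic flaw is the meandering passage where you search for the right conjugation to kill $\tr(uv)$; the computation you settle on, $u^{-1}(uv)u=vu=\zeta^{-1}uv$, is correct, but the exploratory false starts should be cut from a final write-up.
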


\begin{proof}  Applying the determinant to the relation $uv=\zeta vu$
leads to
\[
\det(u)\det(v)=\det(uv)=\zeta^d\det(vu)=\zeta^{d}\det(u)\det(v),
\]
which implies that $\zeta^{d}=1$. Because $\zeta$ is primitive, the positive integer $p$ divides $d$.

The relation $uv=\zeta vu$  is equivalent to $v^*uv=\zeta u$. Thus, by applying the trace,
we have that $\tr(u)=\zeta\tr(u)$, which is true only if $\tr(u)=0$. 
Analogous
reasoning leads to $\tr(v)=0$. Finally, 
\[
\tr(vu)=\tr(uv)=\zeta \tr(vu),
\]
which leads to $\zeta =1$ or $\tr(uv)=0$. As the former cannot hold, it must be that the latter does.

We next show that every $p$-th root of unity is an eigenvalue of $u$. Because the spectrum of a matrix is invariant
under unitary similarity, the relation $v^*uv=\zeta u$ implies that $\zeta\lambda$ is an eigenvalue of $u$ whenever
$\lambda$ is an eigenvalue of $u$. Applying this observation to the eigenvalue $\zeta\lambda$, we see that
$\zeta^2\lambda$ is an eigenvalue of $u$. Hence, by iteration of the argument, $\zeta^k\lambda$ is an eigenvalue
of $u$ for every $k=1,\dots,p$. Because the map $\alpha\mapsto\alpha\lambda$ is a bijection of $\mathbb C$ onto itself,
the spectrum of $u$ must contain at least $p$ elements. But on the other hand, the
spectrum of $u$ cannot contain more than $p$
elements; hence, the spectrum of $u$ must coincide with the set of $p$-th roots of unity. 
A similar argument applies
to $v$.
\end{proof}

The next lemma determines the set of all $p$-th order unitaries $v$, 
given a unitary $u$ with the spectral structure described in Lemma \ref{trace and spectrum}, for which $uv=\zeta vu$.

\begin{lemma}\label{l1} If $d=pn$ and if $u,v\in\M_d(\mathbb C)$ are $p$-th order
unitary matrices such that $uv=\zeta vu$, then there exist a unitary matrix  $y\in\M_d(\mathbb C)$
and unitary matrices $v_2,\dots,v_p\in\M_n(C)$ such that $y^*uy=\tilde u$ and $y^*vy=\tilde v$, where 
\begin{equation}\label{matrix form}
\tilde u=\left[ \begin{array}{ccccc} 1_n &&&& \\ & \zeta1_n &&& \\ &&\zeta^21_n && \\ &&&\ddots & \\ &&&& \zeta^{p-1}1_n\end{array}\right] \mbox{ and }
\tilde v=\left[ \begin{array}{ccccc}0&0&\cdots&& v_1 \\ v_2&0&\ddots&&\vdots \\  0& v_3 & 0 && \\ \vdots & & \ddots& \ddots & \vdots \\ 0 &\cdots &0 & v_p &0 
\end{array}\right] ,
\end{equation}
where $1_n$ denotes the $n\times n$ identity matrix, and where $v_1$ is given by  $v_1=v_2^*v_3^*\cdots v_p^*$.
\end{lemma}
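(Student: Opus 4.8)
The plan is to normalise $u$ first and then read off the structure of $v$ from the commutation relation. By Lemma \ref{trace and spectrum}, the spectrum of $u$ is exactly the set $\{1,\zeta,\dots,\zeta^{p-1}\}$ of $p$-th roots of unity. Moreover, the relation $uv=\zeta vu$ rewrites as $v^*uv=\zeta u$, so $u$ and $\zeta u$ are unitarily similar; consequently the eigenvalue-multiplicity function $\lambda\mapsto\dim\ker(u-\lambda 1_d)$ is invariant under multiplication by $\zeta$, and hence the multiplicities of $1,\zeta,\dots,\zeta^{p-1}$ as eigenvalues of $u$ are all equal. Since they sum to $d=pn$, each equals $n$. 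Choosing an orthonormal basis of $\ker(u-\zeta^{j-1}1_d)$ for each $j=1,\dots,p$ and assembling these as the columns of a unitary $y\in\M_d(\mathbb C)$, ordered by $j=1,2,\dots,p$, yields $y^*uy=\tilde u$ with $\tilde u$ as in (\ref{matrix form}).

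Next I would conjugate the commutation relation by $y$ and exploit the block structure of $\tilde u$. Writing $w=y^*vy$ in $p\times p$ block form $w=[w_{k\ell}]_{k,\ell=1}^p$ with $n\times n$ blocks, the identity $\tilde u w=\zeta w\tilde u$ reads block-wise as $\zeta^{k-1}w_{k\ell}=\zeta^{\ell}w_{k\ell}$ for all $k,\ell$. Hence $w_{k\ell}=0$ unless $k-1\equiv\ell\pmod p$; the surviving blocks are precisely $w_{21},w_{32},\dots,w_{p,p-1}$ together with $w_{1p}$, so $w$ has exactly the shape of $\tilde v$ in (\ref{matrix form}) with $v_k=w_{k,k-1}$ for $k=2,\dots,p$ and $v_1=w_{1p}$. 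Because $w$ is unitary, inspecting the diagonal blocks of $w^*w=1_d$ forces $v_k^*v_k=1_n$ for each $k$ (for each fixed block-column only one block is nonzero), so each $v_k\in\M_n(\mathbb C)$ is unitary.

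Finally I would use $v^p=1_d$, equivalently $w^p=1_d$, to pin down $v_1$. Viewing $w$ as a weighted block cyclic shift on $(\mathbb C^n)^{\oplus p}$ — it sends the $(k-1)$-st block coordinate to $v_k$ times that coordinate for $k=2,\dots,p$, and the $p$-th block coordinate to $v_1$ times that coordinate — tracking a vector supported on the first block coordinate through $p$ iterations shows that the $(1,1)$ block of $w^p$ equals $v_1v_pv_{p-1}\cdots v_2$. Setting this equal to $1_n$ and invoking unitarity of the $v_k$ gives $v_1=(v_pv_{p-1}\cdots v_2)^{-1}=v_2^*v_3^*\cdots v_p^*$, as asserted. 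The only step that requires a little care is the equal-multiplicity argument in the first paragraph; once $u$ has been put into the form $\tilde u$, the remaining assertions follow by direct block computation.
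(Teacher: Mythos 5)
Your proof is correct and follows essentially the same route as the paper's: diagonalise $u$ into the block form $\tilde u$, use the conjugated commutation relation to annihilate all but the cyclic off-diagonal blocks of $y^*vy$, and then read off unitarity of the $v_k$ and the formula $v_1=v_2^*\cdots v_p^*$ from $v^*v=1_d$ and $v^p=1_d$. The one step you handle differently is the equal-multiplicity claim: you derive it from the unitary similarity $v^*uv=\zeta u$, which makes the multiplicity function invariant under multiplication by $\zeta$, whereas the paper infers it from $\tr(u)=0$ together with the spectrum being all $p$-th roots of unity. Your version is actually the tighter argument, since for composite $p$ (e.g.\ $p=4$, $d=8$, multiplicities $3,1,3,1$) a vanishing trace with full spectrum does not by itself force equal multiplicities.
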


\begin{proof}  Lemma \ref{trace and spectrum} indicates that every 
$p$-th root of unity is a spectral element of $u$ and $\tr(u)=0$. Therefore, using $d=pn$, each eigenvalue $\zeta^k$ of
$u$ must appear with multiplicity $n$. Hence, $u$ admits a diagonalisation such as that given in (\ref{matrix form}).

Now let $\tilde v=y^*vy$; in accordance with the structure of $\tilde u$, the operator $\tilde v$ can be expressed as a $p\times p$
matrix of $n\times n$ matrices $v_{ij}$. The relation $\tilde u\tilde v=\zeta^c\tilde v\tilde u$ holds only if the operators $v_{ij}=0$ whenever
$(i,j)\not\in\{(1,p), (k,k-1)\,|\,k=2,\dots,p\}$. Thus, denote $v_{1p}$ by $v_1$ and $v_{k,k-1}$ by $v_k$. Because 
$\tilde v^*\tilde v=\tilde v\tilde v^*=1_d$, each $v_k$ satisfies $v_k^*v_k=v_kv_k^*=1_n$.
Furthermore, $\tilde v ^p=1_d$ implies that $v_pv_{p-1}\dots v_2v_1=1_n$, and so 
$v_1=v_2^*v_3^*\cdots v_p^*$.
\end{proof}

As a consequence of the result above, it is possible to construct a path-connected set 
of $p$-th order unitaries $v$ such that $uv=\zeta vu$ from a certain 
$p$-th order unitary 
matrix $u$.

\begin{corollary} If $u\in\M_d(\mathbb C)$ is a $p$-th order unitary matrix such that
every $p$-th root of unity is an eigenvalue of $u$ of multiplicity $d/p$, then the set of all $p$-th order unitary matrices
$v\in\M_d(\mathbb C)$
for which $uv=\zeta vu$ 
is homeomorphic to the Cartesian product of $p-1$ copies of the unitary group $\mathcal U_{\frac{d}{p}}$.
\end{corollary}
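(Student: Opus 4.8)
The plan is to use Lemma~\ref{l1} to build an explicit parametrisation of the set in question. Write $n=d/p$. Since $u$ is a unitary with each $p$-th root of unity an eigenvalue of multiplicity $n$, fix once and for all a unitary $y\in\M_d(\mathbb C)$ with $y^*uy=\tilde u$, where $\tilde u$ is the block-diagonal matrix displayed in~(\ref{matrix form}). For $(v_2,\dots,v_p)\in\mathcal U_n^{\,p-1}$ let $\tilde v(v_2,\dots,v_p)$ be the block matrix in~(\ref{matrix form}) with $v_1:=v_2^*v_3^*\cdots v_p^*$, and define
\[
\Theta(v_2,\dots,v_p)=y\,\tilde v(v_2,\dots,v_p)\,y^*.
\]
I claim $\Theta$ is a homeomorphism of $\mathcal U_n^{\,p-1}$ onto $\mathcal V_u:=\{\,v\in\M_d(\mathbb C): v^p=1_d,\ v \text{ unitary},\ uv=\zeta vu\,\}$.

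First I would check that $\Theta$ is well defined, i.e.\ that $\Theta(v_2,\dots,v_p)\in\mathcal V_u$. Since $u=y\tilde uy^*$, it suffices to verify $\tilde v^*\tilde v=\tilde v\tilde v^*=1_d$, $\tilde v^p=1_d$, and $\tilde u\tilde v=\zeta\tilde v\tilde u$. Unitarity of $\tilde v$ is clear because $\tilde v$ carries the $k$-th coordinate block isometrically onto the $(k+1)$-st (cyclically) via a unitary; the relation $\tilde u\tilde v=\zeta\tilde v\tilde u$ is the block identity $\zeta^{k-1}v_k=\zeta\cdot\zeta^{k-2}v_k$ on the $(k,k-1)$ block together with $v_1=\zeta\cdot\zeta^{p-1}v_1$ on the $(1,p)$ block; and $\tilde v^p=1_d$ reduces to the single condition $v_1v_pv_{p-1}\cdots v_2=1_n$, which holds for the choice $v_1=v_2^*\cdots v_p^*$ (all cyclic rearrangements of that product then also equal $1_n$). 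These are essentially the computations already used for Lemma~\ref{l1}, so I would keep them short.

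Next, injectivity is immediate: if $\Theta(v_2,\dots,v_p)=\Theta(v_2',\dots,v_p')$, then $\tilde v(v_2,\dots,v_p)=\tilde v(v_2',\dots,v_p')$ and comparing the $(k,k-1)$ blocks gives $v_k=v_k'$ for $k=2,\dots,p$. For surjectivity, take $v\in\mathcal V_u$ and set $\tilde v=y^*vy$; then $\tilde v$ is a $p$-th order unitary with $\tilde u\tilde v=\zeta\tilde v\tilde u$ for our fixed $\tilde u$, so the structural part of the proof of Lemma~\ref{l1} — which uses only $y^*uy=\tilde u$ and not a $v$-dependent change of basis — forces $\tilde v=\tilde v(v_2,\dots,v_p)$ for some $v_2,\dots,v_p\in\mathcal U_n$ with $v_1=v_2^*\cdots v_p^*$, whence $v=\Theta(v_2,\dots,v_p)$. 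Finally, $\Theta$ is continuous since $v_1=v_2^*\cdots v_p^*$, the assembly of $\tilde v$ from its blocks, and conjugation by the fixed unitary $y$ are all continuous, while $\Theta^{-1}$ is continuous because $v\mapsto y^*vy$ is continuous and each coordinate $v_k$ is a continuous function of the entries of $\tilde v$. Hence $\mathcal V_u\cong\mathcal U_n^{\,p-1}=\mathcal U_{d/p}^{\,p-1}$.

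The hard part will be the surjectivity step, namely reconciling the $v$-dependent conjugating unitary produced by Lemma~\ref{l1} with the single fixed diagonalising unitary $y$. I would resolve this either by re-running the (basis-independent) structural argument of that lemma directly on $y^*vy$ as above, or equivalently by noting that any two unitaries conjugating $u$ to $\tilde u$ differ by a unitary in the commutant of $\tilde u$, which is block diagonal, and that conjugation of the form~(\ref{matrix form}) by such a block-diagonal unitary returns a matrix of the same form with the constraint $v_1=v_2^*\cdots v_p^*$ preserved. Everything else is routine verification.
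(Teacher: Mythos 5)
Your proposal is correct and follows exactly the route the paper intends: the corollary is stated as an immediate consequence of Lemma~\ref{l1}, and your map $\Theta$ is just the explicit parametrisation that lemma provides. You also correctly identify and resolve the one point the paper leaves implicit --- that the structural argument in the proof of Lemma~\ref{l1} applies to $y^*vy$ for a single unitary $y$ fixed in advance by $u$ alone, so the parametrisation is genuinely a bijection onto the whole set.
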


The next result is crucial for establishing 
an explicit complete order equivalence in our main result, Theorem \ref{pairs} below.

\begin{lemma}\label{l2} If $d=pn$ and if $u,v\in\M_d(\mathbb C)$ 
are unitaries of the form (\ref{matrix form}), for some unitaries $v_2,\dots,v_p\in\M_n(\mathbb C)$,
then a $p\times p$ matrix 
$Z=[z_{ij}]_{i,j=1}^p$ of $n\times n$ matrices $z_{ij} $ is an element of $\mbox{\rm Alg}(\{u,v\})$
if and only if there exist $p^2$ scalars $\lambda_{ij}$ such that:
\begin{equation}\label{double commutant}
\begin{array}{rcl} 
z_{ii}&=&\lambda_{ii}1_n, \mbox{ for all }i ;\\
z_{1p}&=& \lambda_{1p}v_1 \mbox{ and } z_{k,k-1}=\lambda_{k,k-1}v_k, \mbox{ for all }k=2,\dots,p; \\
z_{p1}&=&\lambda_{p1}v_1^*\mbox{ and } z_{k,k+1}=\lambda_{k,k+1}v_{k+1}^*, \mbox{ for all }k=1,\dots,p-1; \\
z_{ij}&=&\lambda_{ij} (v_i\cdots v_{j+1}), \mbox{ for all }i>j \mbox{ with } |i-j|\geq2; \\
z_{ij}&=&\lambda_{ij} (v_{i+1}^*\cdots v_j^*), \mbox{ for all }i<j \mbox{ with } |i-j|\geq2.
\end{array}
\end{equation}
\end{lemma}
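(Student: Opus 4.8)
The plan is to replace $\mathrm{Alg}(\{u,v\})$ by its spanning set of ``Weyl words'' $v^a u^b$ with $0\le a,b\le p-1$, to compute each such word explicitly as a $p\times p$ array of $n\times n$ blocks, and then to match the resulting block patterns against the five families appearing in (\ref{double commutant}). First I note that the block form (\ref{matrix form}) together with $v_1=v_2^*\cdots v_p^*$ already forces $uv=\zeta vu$, $u^p=1_d$, and $v^p=1_d$ (a short direct check on blocks), so in particular $1_d=u^p$ lies in the algebra. Since the commutation relation $uv=\zeta vu$ lets one push every factor $v$ to the left of every factor $u$ at the cost of a power of $\zeta$, and $u^p=v^p=1_d$ lets one reduce exponents modulo $p$, every product of $u$'s and $v$'s is a scalar multiple of some $v^a u^b$ with $0\le a,b\le p-1$. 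Hence $\mathrm{Alg}(\{u,v\})=\mathrm{Span}_{\mathbb C}\{v^a u^b:0\le a,b\le p-1\}$, a space of dimension at most $p^2$.

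Next I would compute $v^a u^b$ blockwise. Decomposing $\mathbb C^d=\bigoplus_{k=1}^p\mathbb C^n$ along the eigenspaces of $u$, the matrix $u^b$ multiplies the $k$-th summand by the scalar $\zeta^{(k-1)b}$, while $v^a$ carries the $j$-th summand onto the $(j+a)$-th summand (indices read cyclically modulo $p$) by the descending product $v_{j+a}v_{j+a-1}\cdots v_{j+1}$, with the convention $v_{k+p}:=v_k$. Consequently $v^a u^b$ is supported on the single block ``stripe'' $\{(i,j):i-j\equiv a\pmod p\}$, and its $(i,j)$-block there equals $\zeta^{(j-1)b}\,w_{ij}$, where $w_{ij}$ is a fixed unitary depending only on $v_2,\dots,v_p$. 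When $i\ge j$ one has $w_{ij}=v_iv_{i-1}\cdots v_{j+1}$ (in particular $w_{ii}=1_n$, $w_{i,i-1}=v_i$, and $w_{p,1}=v_pv_{p-1}\cdots v_2=v_1^*$), while when $i<j$ the cyclic product wraps around and telescopes, using the identity $v_pv_{p-1}\cdots v_1=1_n$, to $w_{ij}=v_{i+1}^*v_{i+2}^*\cdots v_j^*$ (in particular $w_{i,i+1}=v_{i+1}^*$ and $w_{1,p}=v_1$). Comparing these with (\ref{double commutant}) shows that every element of $\mathrm{Alg}(\{u,v\})$ has precisely the asserted block form.

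For the converse, fix a stripe $a\in\{0,\dots,p-1\}$ and look at its $p$ blocks $(i,j)$ with $i-j\equiv a\pmod p$; as $j$ runs over $\{1,\dots,p\}$ the index $i$ runs bijectively over $\{1,\dots,p\}$ as well. On this stripe the word $v^a u^b$ has $(i,j)$-block $\zeta^{(j-1)b}w_{ij}$, and the $p\times p$ coefficient matrix $[\zeta^{(j-1)b}]_{b,j}$ is a Vandermonde matrix in the distinct nodes $1,\zeta,\dots,\zeta^{p-1}$, hence invertible. Therefore, for any prescribed scalars $\lambda_{ij}$ on the stripe, a suitable linear combination of $v^au^{0},\dots,v^au^{p-1}$ has $(i,j)$-blocks exactly $\lambda_{ij}w_{ij}$; summing over the $p$ stripes realizes every $Z$ of the form (\ref{double commutant}) inside $\mathrm{Alg}(\{u,v\})$. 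The same Vandermonde invertibility shows the $p^2$ words $v^au^b$ are linearly independent, so $\dim\mathrm{Alg}(\{u,v\})=p^2$. The step I expect to be the main obstacle is the second one: correctly evaluating the cyclic product $v_{j+a}\cdots v_{j+1}$ across the wrap-around and seeing it collapse to $v_{i+1}^*\cdots v_j^*$ -- this is exactly where the constraint $v_1=v_2^*\cdots v_p^*$ is essential and where the modular index bookkeeping must be done with care.
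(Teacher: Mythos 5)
Your argument is correct, but it takes a genuinely different route from the paper. The paper proves Lemma \ref{l2} via von Neumann's Double Commutant Theorem: after observing that $\mbox{\rm Alg}(\{u,v\})$ is $*$-closed, it first computes the commutant $\{u,v\}'$ explicitly as the set of block-diagonal matrices $\bigl(\bigoplus_{k=1}^{p-1}(v_p\cdots v_{k+1})^*x(v_p\cdots v_{k+1})\bigr)\oplus x$ with $x\in\M_n(\mathbb C)$, and then determines which $Z$ commute with all such matrices, entry by entry. You instead work from the inside out: you normal-order words into $v^au^b$ using the Weyl relation, observe that each such word is supported on a single cyclic stripe $i-j\equiv a\pmod p$ with $(i,j)$-block $\zeta^{(j-1)b}w_{ij}$, and then invert the Vandermonde system $[\zeta^{(j-1)b}]$ to realize arbitrary scalars $\lambda_{ij}$ stripe by stripe. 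Both are complete; your cyclic-product bookkeeping (collapsing $v_{i+p}\cdots v_{j+1}$ to $v_{i+1}^*\cdots v_j^*$ via $v_p\cdots v_1=1_n$) is handled correctly and is indeed the only delicate point. The trade-offs: your approach is more elementary and self-contained (no appeal to the Double Commutant Theorem), it yields the linear independence of the $p^2$ words and hence $\dim\mbox{\rm Alg}(\{u,v\})=p^2$ for free, and it makes the $*$-isomorphism of Proposition \ref{m=2 iso} essentially transparent. The paper's approach, on the other hand, produces the explicit description of $\{u,v\}'$ as a by-product, which is what directly feeds Corollary \ref{weyl irr} and the irreducibility arguments used repeatedly in the later sections; with your route one would instead deduce triviality of the commutant from $\mbox{\rm Alg}(\{u,v\})=\M_p(\mathbb C)$ in the $n=1$ case.
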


\begin{proof}
First note that if a matrix $b$ commutes with a unitary $w$, then $b$ also commutes with $w^{-1}=w^*$, implying that $b^*$ commutes with $w$; hence, the set $\{w\}'$ of matrices
commuting with $w$ is closed under the adjoint $*$. Second, as the inverse $w^{-1}$ of a unitary matrix $w$ is a polynomial in $w$, then
algebra generated by one or more unitary matrices is $*$-closed. Hence, by von Neumann's Double Commutant Theorem,
\[
\mbox{\rm Alg}(\{u,v\})=\{u,v\}'',
\]
where $\mathcal S''$ denotes the double commutant $(\mathcal S')'$ of a set $\mathcal S$.
We begin, therefore, by showing that 
\begin{equation}\label{commutant}
\{u,v\}'=\left\{ \left( \bigoplus_{k=1}^{p-1} (v_p\cdots v_{k+1})^*x(v_p\cdots v_{k+1})\right)\bigoplus x\,|\,x\in\M_n(\mathbb C)\right\}.
\end{equation}

To this end,
suppose that $X=[x_{ij}]_{i,j=1}^p\in \M_d(\mathbb C)$, where $x_{ij}\in\M_n(\mathbb C)$ for all $i$ and $j$, commutes with $u$ and $v$.
Then $Xu=uX$ implies that $\zeta^{i-1}x_{ij}=\zeta^{j-1}x_{ij}$ for all $i,j$, and so $x_{ij}=0$ for all $i$ and $j$ with $j\not=i$. The equation $Xv=vX$
yields $x_{11}v_1=v_1x_{pp}$ and $v_kx_{k-1,k-1}=x_{kk}v_k$ for $k=2,\dots,p$. Thus,
\begin{equation}\label{farm}
x_{pp}=v_1^*x_{11}v_1, \; x_{11}=v_2^*x_{22}v_2,\;x_{22}=v_3^*x_{33}v_3, \;\cdots,\; x_{p-1,p-1}=v_p^*x_{pp}v_p,
\end{equation}
which yields
\begin{equation}\label{farm boy}
x_{kk}=(v_{k}\cdots v_1)x_{pp}(v_1^*\cdots v_{k}^*),\mbox{ for all }k=1,\dots,p.
\end{equation}
Therefore, once $x_{pp}$ is specified, all other diagonal entries of $X$ are determined.

Suppose next that $Z=[z_{ij}]_{i,j=1}^p\in \M_d(\mathbb C)$, where $z_{ij}\in\M_n(\mathbb C)$ for all $i$ and $j$, commutes with every $X\in\{u,v\}'$.
Write the diagonal entries of $X\in\{u,v\}'$ as $x_1,\dots,x_p$, and recall from equation (\ref{farm boy})
that
\[
x_{k}=(v_{k}\cdots v_1)x_{p}(v_1^*\cdots v_{k}^*),\mbox{ for all }k=1,\dots,p.
\]
The equation $XZ=ZX$ implies that $x_iz_{ij}=z_{ij}x_j$ for all $i,j$. For $i=j$, these relations imply that 
$z_{ii}$ commutes with every $n\times n$ matrix; hence, $z_{ii}=\lambda_{ii}1_n$, for some 
$\lambda_{ii}\in\mathbb C$.   

If $k\in\{2,\dots,p\}$, then $x_kz_{k,k-1}=z_{k,k-1}x_{k-1}$ is written, using the equations (\ref{farm}), as
\[
x_kz_{k,k-1}=z_{k,k-1}(v_k^*x_kv_k),
\]
and so $x_k(z_{k,k-1}v_k^*)=(z_{k,k-1}v_k^*)x_k$. As this holds for all $x_k\in\M_n(\mathbb C)$, $z_{k,k-1}v_k^*=\lambda_{k,k-1}1_n$,
for some $\lambda_{k,k-1}\in\mathbb C$; that is, $z_{k,k-1}=v_k$, for $k=2,\dots,p$. The same type of 
calculation leads to $z_{1p}=\lambda_{1p}v_1$.

Similarly, if $k\in\{1,\dots,p-1\}$, then $x_kz_{k,k+1}=z_{k,k+1}x_{k+1}$ is, using the equations (\ref{farm}), equivalent to the equation
\[
x_{k+1}(v_{k+1}z_{k,k+1})=(v_{k+1}z_{k,k+1})x_{k+1},
\]
which implies that $z_{k,k+1}=\lambda_{k,k+1}v_{k+1}^*$, as the equation above must hold for all $x_{k+1}$.
Likewise, $z_{p1}=v_1^*$.

Consider now the entries of $Z$ for which $|i-j|\geq2$. Assume first the cases where
$i>j$. By equations (\ref{farm}),
\[
\begin{array}{rcl}
x_i&=&(v_i\cdots v_{j+1})(v_j\cdots v_1)x_p(v_1^*\cdots v_j^*)(v_{j+1}^*\cdots v_i^*) \\ && \\
&=& (v_i\cdots v_{j+1}) x_j (v_{j+1}^*\cdots v_i^*).
\end{array}
\]
Thus, the equation $x_iz_{ij}=z_{ij}x_j$ is
\[
x_iz_{ij}=z_{ij}(v_i\cdots v_{j+1})^* x_i (v_i\cdots v_{j+1}),
\]
which is equivalent to 
\[
x_i\left( z_{ij}(v_i\cdots v_{j+1}) ^*\right) = \left(z_{ij}(v_i\cdots v_{j+1}) ^* \right)x_i.
\]
As $x_i\in\M_n(\mathbb C)$ can be arbitrary, $z_{ij}=\lambda_{ij}1_n$ for some $\lambda_{ij}\in\mathbb C$, which implies that
\[
z_{ij}=\lambda_{ij} (v_i\cdots v_{j+1}), \mbox{ for all }i>j \mbox{ with } |i-j|\geq2.
\]

In the cases where $|i-j|\ge2$ and $i<j$, the same type of arguments lead to 
\[
z_{ij}=\lambda_{ij} (v_{i+1}^*\cdots v_j^*), \mbox{ for all }i<j \mbox{ with } |i-j|\geq2,
\]
which completes the proof.
\end{proof}

As an example of the matrix structure indicated 
Lemma \ref{l2}, consider the case where $p=5$. Select any $n\in\mathbb N$ and unitary matrices 
$v_1,\dots,v_4\in M_n(\mathbb C)$,
and set $v_5=(v_4v_3v_2v_1)^*$. Thus, 
\[
u=\left[ \begin{array}{ccccc} 1_n &&&& \\ & \zeta 1_n &&& \\ && \zeta^21_n && \\ &&&\zeta^31_n & \\ &&&& \zeta^41_n \end{array} \right]
\mbox{ and }
v=\left[ \begin{array}{ccccc} 0 &&&& v_1 \\ v_2 & 0 &&& \\ &v_3& 0 && \\ &&v_4&0 & \\ &&&v_5& 0 \end{array} \right].
\]
By Lemma \ref{l2}, $z\in\mbox{\rm Alg}(\{u,v\})$ if and only if
\[
z=
\left[ \begin{array}{ccccc} 
\lambda_{11}\,1_n &\lambda_{12}\,v_2^* &\lambda_{13}(v_2^*v_3^*)&\lambda_{14}(v_2^*v_3^*v_4^*)& \lambda_{15}\,v_1 \\ 
\lambda_{21} \,v_2  & \lambda_{22}1_n &\lambda_{23}\,v_3^*&\lambda_{24}(v_3^*v_4^*)& \lambda_{25}(v_3^*v_4^*v_5^*)\\ 
\lambda_{31}(v_3v_2)&\lambda_{32}\,v_3& \lambda_{33}1_n &\lambda_{34}\,v_4^*&\lambda_{35} (v_4^*v_5^*)\\ 
\lambda_{41}(v_4v_3v_2)&\lambda_{42}(v_4v_3)&\lambda_{43}\,v_4&\lambda_{44}1_n&\lambda_{45}\,v_5^* \\ 
\lambda_{51}(v_5v_4v_3v_2)&\lambda_{52}(v_5v_4v_3)&\lambda_{53}(v_5v_4)&\lambda_{54}\,v_5& \lambda_{55}1_n 
\end{array} \right],
\]
for some $\lambda_{ij}\in\mathbb C$.

Let us also make note of the following useful consequence of Lemma \ref{l2}.

\begin{corollary}\label{weyl irr}
If $u,v\in\M_p(\mathbb C)$ 
are unitaries of the form (\ref{matrix form}), for some complex numbers $v_2,\dots,v_p$,
then $\{u,v\}'=\{\lambda\,1_p\,|\,\lambda\in\mathbb C\}$ and $\mbox{\rm Alg}(\{u,v\})=\M_p(\mathbb C)$.
\end{corollary}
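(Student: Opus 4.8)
The plan is to read off both statements directly from Lemma \ref{l2} and the commutant computation inside its proof, specialised to $n=1$. The one observation that does all the work is that when $n=1$ the matrices $v_2,\dots,v_p$ --- and hence also $v_1=v_2^*\cdots v_p^*$ --- are scalars of modulus $1$. Consequently every product of the form $v_i\cdots v_{j+1}$ or $v_{i+1}^*\cdots v_j^*$ that appears in (\ref{double commutant}) is a \emph{nonzero} complex number, and every map $x\mapsto(v_p\cdots v_{k+1})^*x(v_p\cdots v_{k+1})$ appearing in (\ref{commutant}) is, for $n=1$, the identity on $\M_1(\mathbb C)=\mathbb C$, since conjugating a scalar by a unimodular scalar changes nothing.

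For the first assertion I would simply invoke (\ref{commutant}) from the proof of Lemma \ref{l2}. With $n=1$ each summand $(v_p\cdots v_{k+1})^*x(v_p\cdots v_{k+1})$ collapses to $x$, so the right-hand side of (\ref{commutant}) becomes $\{x\oplus\cdots\oplus x\,|\,x\in\mathbb C\}=\{\lambda\,1_p\,|\,\lambda\in\mathbb C\}$, which is exactly the claim $\{u,v\}'=\mathbb C\,1_p$.

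For the second assertion I would apply Lemma \ref{l2} itself with $n=1$: a matrix $Z=[z_{ij}]_{i,j=1}^p\in\M_p(\mathbb C)$ lies in $\mbox{\rm Alg}(\{u,v\})$ precisely when its entries have the forms displayed in (\ref{double commutant}) for some scalars $\lambda_{ij}$. Because each of the scalars $1$, $v_1$, $v_1^*$, $v_k$, $v_{k+1}^*$, $v_i\cdots v_{j+1}$, and $v_{i+1}^*\cdots v_j^*$ is nonzero, for every pair $(i,j)$ the correspondence $\lambda_{ij}\mapsto z_{ij}$ is a bijection of $\mathbb C$ onto $\mathbb C$; hence the entries of $Z$ may be prescribed arbitrarily and independently, and therefore $\mbox{\rm Alg}(\{u,v\})=\M_p(\mathbb C)$. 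Alternatively, once the first assertion is available one may bypass (\ref{double commutant}) entirely: by the identification $\mbox{\rm Alg}(\{u,v\})=\{u,v\}''$ recorded at the beginning of the proof of Lemma \ref{l2}, one gets $\mbox{\rm Alg}(\{u,v\})=(\mathbb C\,1_p)'=\M_p(\mathbb C)$. There is no genuine obstacle in this argument; the only point requiring (minimal) care is to record explicitly the unimodularity of the $v_k$, which is precisely what makes the relevant scalar products nonzero and the relevant scalar conjugations trivial.
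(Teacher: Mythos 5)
Your proposal is correct and follows exactly the route the paper intends: the corollary is stated as an immediate consequence of Lemma \ref{l2}, obtained by specialising to $n=1$, and your observation that unimodularity of the scalars $v_k$ trivialises the conjugations in (\ref{commutant}) and makes the multipliers in (\ref{double commutant}) nonzero is precisely the (unwritten) justification. The alternative closing step via $\mbox{\rm Alg}(\{u,v\})=\{u,v\}''=(\mathbb C\,1_p)'=\M_p(\mathbb C)$ is also valid and slightly cleaner.
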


The structure of $\mbox{\rm Alg}(\{u,v\})$ provided by Lemma \ref{l2} gives an explicit $*$-isomorphism of matrix algebras:

\begin{proposition}\label{m=2 iso}
If $d=pn$ and if $u,v\in\M_d(\mathbb C)$ 
are unitaries of the form (\ref{matrix form}), for some unitaries $v_2,\dots,v_p\in\M_n(\mathbb C)$,
then the function $\rho:\M_p(\mathbb C)\rightarrow\M_d(\mathbb C)$ defined by
\[
\rho\left([\lambda_{ij}]_{i,j=1}^p\right) = [z_{ij}]_{i,j}^p,
\]
where the matrices $z_{ij} \in\M_n(\mathbb C)$ are given as in equations (\ref{double commutant}), is a unital 
$*$-isomorphism of $\M_p(\mathbb C)$ and $\mbox{\rm Alg}(\{u,v\})$.
\end{proposition}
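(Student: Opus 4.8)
The plan is to verify directly that the map $\rho$ defined via the template in \eqref{double commutant} is a well-defined unital $*$-homomorphism, and then to deduce that it is a $*$-isomorphism onto $\mbox{\rm Alg}(\{u,v\})$ from the surjectivity already implicit in Lemma \ref{l2} together with a dimension count. The linearity of $\rho$ is immediate from the form of the equations in \eqref{double commutant}: each $z_{ij}$ depends linearly on the single scalar $\lambda_{ij}$, with a fixed matrix coefficient (a product of the $v_k$'s and their adjoints, or $1_n$) that does not depend on the $\lambda$'s. Unitality is the observation that the identity matrix $1_p\in\M_p(\mathbb C)$ has $\lambda_{ii}=1$ and $\lambda_{ij}=0$ for $i\ne j$, which is sent to the block-diagonal matrix with each diagonal block $1_n$, i.e.\ to $1_d$. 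That $\rho$ preserves the adjoint is a short computation: taking adjoints transposes the block indices and replaces each coefficient word $v_i\cdots v_{j+1}$ (for $i>j$) by $(v_i\cdots v_{j+1})^* = v_{j+1}^*\cdots v_i^*$, which is exactly the coefficient prescribed for the $(j,i)$ entry in \eqref{double commutant}; so $\rho(\overline{\lambda_{ij}}^{\,t}) = \rho([\lambda_{ij}])^*$, using also that the diagonal coefficient $1_n$ is selfadjoint.

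The heart of the matter is multiplicativity: one must check that for $A=[a_{ij}]$ and $B=[b_{ij}]$ in $\M_p(\mathbb C)$, we have $\rho(A)\rho(B) = \rho(AB)$, where $(AB)_{ij} = \sum_k a_{ik}b_{kj}$. Computing the $(i,j)$ block of $\rho(A)\rho(B)$ gives $\sum_k \rho(A)_{ik}\rho(B)_{kj} = \sum_k a_{ik}(\text{coeff}_{ik})\, b_{kj}(\text{coeff}_{kj})$, where the coefficient words are the $n\times n$ matrices specified in \eqref{double commutant}. So the claim reduces to the purely combinatorial identity: for each fixed triple $(i,k,j)$, the product $(\text{coeff}_{ik})(\text{coeff}_{kj})$ equals $(\text{coeff}_{ij})$. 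This is a telescoping statement about words in the $v_\ell$'s: e.g.\ if $i>k>j$ then $(v_i\cdots v_{k+1})(v_k\cdots v_{j+1}) = v_i\cdots v_{j+1}$; if $i>k$ but $k<j$ one gets $(v_i\cdots v_{k+1})(v_{k+1}^*\cdots v_j^*)$, which collapses by unitarity of the $v_\ell$'s; the "wrap-around" cases involving the $(1,p)$ and $(p,1)$ corners use the defining relation $v_1 = v_2^*v_3^*\cdots v_p^*$ from Lemma \ref{l1} to close up the cycle. I would phrase this uniformly by introducing, for each ordered pair $(i,j)$, a single normalized coefficient $V_{ij}\in\M_n(\mathbb C)$ — with $V_{ii}=1_n$ and $V_{ij}V_{jk}=V_{ik}$ for all $i,j,k$, i.e.\ a "matrix of matrix units twisted by a cocycle" — and checking that the words written in \eqref{double commutant} are exactly such a consistent family; then multiplicativity of $\rho$ is automatic because the $\lambda$'s simply multiply as scalars and the $V$'s compose as required. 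This cocycle/telescoping verification, while elementary, is the step that requires care to handle every index range (the two off-diagonal bands, the two corners, and the long-range entries) without error.

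Finally, once $\rho$ is known to be a unital $*$-homomorphism of $\M_p(\mathbb C)$ into $\M_d(\mathbb C)$, it is automatically injective, since $\M_p(\mathbb C)$ is simple and $\rho$ is nonzero (indeed unital). For surjectivity onto $\mbox{\rm Alg}(\{u,v\})$: Lemma \ref{l2} states that every $Z\in\mbox{\rm Alg}(\{u,v\})$ has exactly the block form \eqref{double commutant} for suitable scalars $\lambda_{ij}$, which says precisely that $Z = \rho([\lambda_{ij}])$; hence $\mbox{\rm Alg}(\{u,v\})\subseteq \mbox{\rm ran}\,\rho$. Conversely $\rho(\M_p(\mathbb C))\subseteq\mbox{\rm Alg}(\{u,v\})$ because $\mbox{\rm Alg}(\{u,v\})$ is a subalgebra containing $u$ and $v$, and one checks that $u,v$ themselves lie in $\rho(\M_p(\mathbb C))$ (take $[\lambda_{ij}]$ to be, respectively, the diagonal matrix $\mbox{diag}(1,\zeta,\dots,\zeta^{p-1})$ and the cyclic permutation pattern), so $\mbox{\rm ran}\,\rho$, being itself an algebra containing $u,v$, contains $\mbox{\rm Alg}(\{u,v\})$ — actually it is cleaner just to use the one inclusion from Lemma \ref{l2} in each direction. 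Therefore $\rho$ is a unital $*$-isomorphism of $\M_p(\mathbb C)$ onto $\mbox{\rm Alg}(\{u,v\})$, as claimed. The main obstacle, as noted, is organizing the multiplicativity check so that the telescoping of the $v_\ell$-words is transparent across all the index cases; everything else is formal.
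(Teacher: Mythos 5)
Your proposal is correct and follows essentially the same route as the paper: linearity, unitality, and $*$-preservation are read off from the defining equations, bijectivity onto $\mbox{\rm Alg}(\{u,v\})$ comes from Lemma \ref{l2}, and multiplicativity reduces to the telescoping identity for the coefficient words $v_i\cdots v_{j+1}$ (including the wrap-around cases via $v_1=v_2^*\cdots v_p^*$), which is exactly the row-times-column computation the paper carries out. Your reformulation of that step as a cocycle condition $V_{ik}V_{kj}=V_{ij}$ is merely a cleaner bookkeeping of the same verification, not a different argument.
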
 

\begin{proof} By Lemma \ref{l2}, the function $\rho$ maps $\M_p(\mathbb C)$ onto $\cstar(u,v)$. Furthermore, $\rho$ is plainly linear and $\ker\rho=\{0\}$, 
and so $\rho$ is a linear isomorphism. It remains to show that $\rho$ is a $*$-homomorphism. Equations (\ref{double commutant}) indicate
that $\rho(\Lambda^*)=\rho(\Lambda)^*$, for all $\Lambda\in \M_p(\mathbb C)$, and so the multiplicativity of $\rho$ is the only property left to confirm.

To this end, let $A=[\alpha_{ij}]_{i,j}$ and $B=[\beta_{ij}]_{i,j}$ be elements of $\M_p(\mathbb C)$. We shall compare the entries of $\rho(A)\rho(B)$ with those of $\rho(AB)$.
If, for example, $i>j$ and $|i-j|\geq 2$, then the entries of row $i$ of $\rho(A)$ are
\[
\alpha_{i1}(v_i\cdots v_2), \quad \alpha_{i2}(v_i\cdots v_3), \quad \cdots,\quad \alpha_{ii}1_n, \quad \alpha_{i,i+1}v_{i+1}^*, \quad\cdots, \alpha_{ip}(v_{i+1}^*\cdots v_p^*),
\]
while the entries of column $j$ of $\rho(B)$ are
\[
\beta_{1j}(v_2^*\cdots v_j^*), \quad \beta_{2j}(v_3^*\cdots v_j^*),\quad\cdots, \beta_{jj}1_n, \quad \beta_{j+1,j} v_p, \quad\cdots, \beta_{pj}(v_p\cdots v_{j+1}).
\]
Therefore, the $(i,j)$-entry of $\rho(A)\rho(B)$ is $\displaystyle\sum_{k=1}^p \alpha_{ik}\beta_{kj}(v_i\cdots v_{j+1})$, which is the $(i,j)$-entry of $\rho(AB)$.
The arguments for all other choices of $i$ and $j$ are similar.
\end{proof}

\begin{theorem}\label{pairs} If two $p$-th order unitary matrices 
$u$ and $v$ satisfy the Weyl commutation relation $uv=\zeta vu$, then
$( u,  v)\simeq_{\rm ord}(\mathfrak u,\mathfrak v)$.
\end{theorem}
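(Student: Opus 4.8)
The plan is to reduce the general pair $(u,v)$ to the normal form of Lemma \ref{l1}, and then exhibit the complete order equivalence with $(\mathfrak u,\mathfrak v)$ directly using the $*$-isomorphism $\rho$ of Proposition \ref{m=2 iso}. By Lemma \ref{trace and spectrum}, $p$ divides $d$, so we write $d=pn$. By Lemma \ref{l1}, there is a unitary $y\in\M_d(\mathbb C)$ and unitaries $v_2,\dots,v_p\in\M_n(\mathbb C)$ so that $y^*uy=\tilde u$ and $y^*vy=\tilde v$ are of the form (\ref{matrix form}). Conjugation by $y$ is a unitary equivalence, hence a complete order equivalence, so it suffices to prove $(\tilde u,\tilde v)\simeq_{\rm ord}(\mathfrak u,\mathfrak v)$.

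The key observation is that Proposition \ref{m=2 iso} provides, for the pair $(\tilde u,\tilde v)$ with $d=pn$, a unital $*$-isomorphism $\rho:\M_p(\mathbb C)\to\mbox{\rm Alg}(\{\tilde u,\tilde v\})$ built from the explicit formulas (\ref{double commutant}); and for the pair $(\mathfrak u,\mathfrak v)$ itself, the case $n=1$ (with all $v_k=1$) of the very same proposition gives the identity map $\rho_0=\id$ on $\M_p(\mathbb C)=\mbox{\rm Alg}(\{\mathfrak u,\mathfrak v\})$ (using Corollary \ref{weyl irr}). I would first check that $\rho(\mathfrak u)=\tilde u$ and $\rho(\mathfrak v)=\tilde v$: writing $\mathfrak u=[\lambda_{ij}]$ with $\lambda_{ij}=\zeta^{i-1}\delta_{ij}$, the formulas (\ref{double commutant}) give $\rho(\mathfrak u)=\mathrm{diag}(1_n,\zeta 1_n,\dots,\zeta^{p-1}1_n)=\tilde u$; writing $\mathfrak v=[\mu_{ij}]$, whose only nonzero entries are $\mu_{1p}=1$ and $\mu_{k,k-1}=1$ for $k=2,\dots,p$, the formulas give $\rho(\mathfrak v)$ with $(1,p)$-entry $v_1$ and $(k,k-1)$-entry $v_k$, i.e.\ $\rho(\mathfrak v)=\tilde v$. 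Consequently $\rho$ restricts to a linear isomorphism $\phi:\mathcal O_{(\mathfrak u,\mathfrak v)}\to\mathcal O_{(\tilde u,\tilde v)}$ with $\phi(\mathfrak u)=\tilde u$, $\phi(\mathfrak v)=\tilde v$, and $\phi(1_p)=1_d$.

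It remains to verify that $\phi$ and $\phi^{-1}$ are completely positive. But $\phi$ is the restriction of the $*$-isomorphism $\rho$, and $*$-isomorphisms of matrix algebras are completely positive with completely positive inverses; restricting a ucp map to an operator subsystem yields a ucp map, and the same applies to $\rho^{-1}$ restricted to $\mathcal O_{(\tilde u,\tilde v)}$. Hence $\phi$ is a unital complete order isomorphism, so $(\mathfrak u,\mathfrak v)\simeq_{\rm ord}(\tilde u,\tilde v)$, and combining with the unitary equivalence $(\tilde u,\tilde v)\simeq_{\rm u}(u,v)$ gives $(u,v)\simeq_{\rm ord}(\mathfrak u,\mathfrak v)$.

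The main obstacle is bookkeeping rather than conceptual: one must be careful that the operator system $\mathcal O_{(\mathfrak u,\mathfrak v)}$ is genuinely what $\rho$ acts on, i.e.\ that $\mathcal O_{(\mathfrak u,\mathfrak v)}$ sits inside $\M_p(\mathbb C)=\mbox{\rm Alg}(\{\mathfrak u,\mathfrak v\})$ as a subspace, and that under $\rho$ it lands inside $\mathcal O_{(\tilde u,\tilde v)}$ (which follows because $\rho$ is linear, unital, $*$-preserving, and sends $\mathfrak u\mapsto\tilde u$, $\mathfrak v\mapsto\tilde v$). The only point requiring a little care is the identification of $\rho_0$ with the identity on $\M_p(\mathbb C)$ in the base case $n=1$, which is exactly the content of Corollary \ref{weyl irr} together with a direct inspection of (\ref{double commutant}) when each $v_k$ is the scalar $1$.
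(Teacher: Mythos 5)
Your proposal is correct and follows essentially the same route as the paper's own proof: reduce to the normal form of Lemma \ref{l1} via unitary conjugation, then restrict the $*$-isomorphism $\rho$ of Proposition \ref{m=2 iso} to the operator system $\mathcal O_{(\mathfrak u,\mathfrak v)}$. Your explicit verification that $\rho(\mathfrak u)=\tilde u$ and $\rho(\mathfrak v)=\tilde v$ is a worthwhile detail that the paper asserts without computation.
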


\begin{proof} If $u,v\in\M_d(\mathbb C)$, then Lemma \ref{trace and spectrum} asserts that $p$ divides $d$ and that there is a unitary matrix $y\in\M_d(\mathbb C)$ 
such that $y^*uy$ and $y^*vy$ are the matrices in (\ref{matrix form}). As the map $x\mapsto y^*xy$ 
is a $*$-automorphism of $\M_d(\mathbb C)$, it is enough to assume that $u$ and $v$
are in this form already. In that regard, the isomorphism $\rho:\M_p(\mathbb C)\rightarrow\M_d(\mathbb C)$ in Theorem \ref{m=2 iso} satisfies $\rho(\mathfrak u)=u$ and
$\rho(\mathfrak v)=v$. Furthermore, as $\rho$ is a unital $*$-isomorphism, its restriction $\phi$ to the operator system generated by the Weyl matrices $\mathfrak u$
and $\mathfrak v$ has the property that both $\phi$ and $\phi^{-1}$ are completely positive. 
\end{proof}

\begin{corollary}\label{main result Weyl} The Weyl unitary matrices $\mathfrak u$ and $\mathfrak v$ are universal for the commutation relation $uv=\zeta vu$.
\end{corollary}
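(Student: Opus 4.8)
The plan is to derive the corollary directly from Theorem \ref{pairs}, since ``universality'' here is precisely the statement that an arbitrary pair of $p$-th order unitaries satisfying the Weyl relation is the image, under a unital completely positive map, of the Weyl pair $(\mathfrak u,\mathfrak v)$. First I would recall the definition: to say $\mathfrak u$ and $\mathfrak v$ are universal for the relation $uv=\zeta vu$ means that for every $d$ and every pair $u,v\in\M_d(\mathbb C)$ of $p$-th order unitaries with $uv=\zeta vu$, there is a ucp map $\Phi:\M_p(\mathbb C)\to\M_d(\mathbb C)$ with $\Phi(\mathfrak u)=u$ and $\Phi(\mathfrak v)=v$.

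Given such a pair $u,v$, Theorem \ref{pairs} provides a complete order equivalence $(u,v)\simeq_{\rm ord}(\mathfrak u,\mathfrak v)$, i.e.\ a unital linear isomorphism $\phi:\mathcal O_{(\mathfrak u,\mathfrak v)}\to\mathcal O_{(u,v)}$ with $\phi$ and $\phi^{-1}$ both ucp and $\phi(\mathfrak u)=u$, $\phi(\mathfrak v)=v$. Since $\mathcal O_{(\mathfrak u,\mathfrak v)}\subseteq\M_p(\mathbb C)$ is an operator system in a C$^*$-algebra and $\phi:\mathcal O_{(\mathfrak u,\mathfrak v)}\to\M_d(\mathbb C)$ is ucp, Arveson's extension theorem furnishes a ucp extension $\Phi:\M_p(\mathbb C)\to\M_d(\mathbb C)$ of $\phi$. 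Then $\Phi(\mathfrak u)=\phi(\mathfrak u)=u$ and $\Phi(\mathfrak v)=\phi(\mathfrak v)=v$, which is exactly the asserted universality. (One may alternatively bypass Theorem \ref{pairs} altogether and read the conclusion straight off Proposition \ref{m=2 iso}: after conjugating $u,v$ into the form (\ref{matrix form}), the $*$-isomorphism $\rho:\M_p(\mathbb C)\to\mbox{\rm Alg}(\{u,v\})\subseteq\M_d(\mathbb C)$ is in particular ucp and satisfies $\rho(\mathfrak u)=u$, $\rho(\mathfrak v)=v$.)

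There is essentially no obstacle here: the corollary is a formal consequence of the theorem together with the standard fact that a ucp map on an operator system extends to a ucp map on the ambient matrix algebra. The only point requiring a word of care is that one should state which version of universality is meant (the existence of a single ucp $\Phi$ sending the Weyl pair to an arbitrary admissible pair), and note that completeness of positivity for the extension is automatic from Arveson's theorem rather than something that must be checked by hand.

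\begin{proof}
Let $u,v\in\M_d(\mathbb C)$ be $p$-th order unitary matrices with $uv=\zeta vu$. By Theorem \ref{pairs} there is a unital linear isomorphism $\phi:\mathcal O_{(\mathfrak u,\mathfrak v)}\to\mathcal O_{(u,v)}$ such that $\phi$ and $\phi^{-1}$ are ucp and $\phi(\mathfrak u)=u$, $\phi(\mathfrak v)=v$. Viewing $\phi$ as a ucp map from the operator system $\mathcal O_{(\mathfrak u,\mathfrak v)}\subseteq\M_p(\mathbb C)$ into $\M_d(\mathbb C)$, Arveson's extension theorem yields a ucp map $\Phi:\M_p(\mathbb C)\to\M_d(\mathbb C)$ extending $\phi$. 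Then $\Phi(\mathfrak u)=u$ and $\Phi(\mathfrak v)=v$, so the Weyl unitaries $\mathfrak u$ and $\mathfrak v$ are universal for the relation $uv=\zeta vu$.
\end{proof}
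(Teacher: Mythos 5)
Your proof is correct and follows exactly the same route as the paper: invoke Theorem \ref{pairs} to get the ucp isomorphism $\phi$ with $\phi(\mathfrak u)=u$, $\phi(\mathfrak v)=v$, then apply the Arveson Extension Theorem to extend $\phi$ to a ucp map $\Phi:\M_p(\mathbb C)\to\M_d(\mathbb C)$. The parenthetical alternative via Proposition \ref{m=2 iso} is a valid shortcut but not needed; no gaps here.
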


\begin{proof} Suppose that $u$ and $v$ are $p$-th order $d\times d$ unitary matrices 
that satisfy the Weyl commutation relation $uv=\zeta vu$. By Theorem \ref{pairs}, the linear isomorphism $\phi:\mathcal O_{(\mathfrak u, \mathfrak v)}\rightarrow\mathcal O_{(u,v)}$
is completely positive. Viewing $\phi$ as a ucp from the operator 
system $\mathcal O_{(\mathfrak u, \mathfrak v)}$ into the matrix algebra $\M_d(\mathbb C)$, the map $\phi$
admits a ucp extension $\Phi$ to $\M_p(\mathbb C)$, by the Arveson Extension Theorem 
\cite{arveson1969,Paulsen-book}. Clearly the map $\Phi$ sends $\mathfrak u$ to $u$
and $\mathfrak v$ to $v$.
\end{proof}

\begin{corollary}\label{pairs2} If two $p$-th order unitary matrices $u$ and $v$ satisfy the Weyl commutation relation $uv=\zeta vu$, then
the pair $(u,v)$ is universal for all $p$-th order unitary matrices that satisfy the Weyl commutation relation.
\end{corollary}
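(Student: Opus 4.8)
The plan is to obtain this immediately from Theorem~\ref{pairs}, together with the fact that complete order equivalence is an equivalence relation and the Arveson extension theorem; essentially no new work is required.

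First I would record that $\simeq_{\rm ord}$ is genuinely an equivalence relation on tuples of matrices. Reflexivity is clear. Symmetry follows by replacing an implementing linear isomorphism $\phi$ by $\phi^{-1}$, which is ucp by hypothesis and which again carries generators to generators. Transitivity follows because a composition of ucp maps is ucp, a composition of linear isomorphisms is a linear isomorphism, and the composition again sends the $k$-th generator to the $k$-th generator.

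Next, let $u,v\in\M_d(\mathbb C)$ be the given $p$-th order unitaries with $uv=\zeta vu$, and let $u',v'\in\M_{d'}(\mathbb C)$ be an arbitrary pair of $p$-th order unitaries with $u'v'=\zeta v'u'$. By Theorem~\ref{pairs}, $(u,v)\simeq_{\rm ord}(\mathfrak u,\mathfrak v)$ and $(u',v')\simeq_{\rm ord}(\mathfrak u,\mathfrak v)$, so by symmetry and transitivity $(u,v)\simeq_{\rm ord}(u',v')$. Fix an implementing linear isomorphism $\phi:\mathcal O_{(u,v)}\rightarrow\mathcal O_{(u',v')}$, which is ucp and satisfies $\phi(u)=u'$ and $\phi(v)=v'$. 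Regarding $\phi$ as a ucp map from the operator system $\mathcal O_{(u,v)}\subseteq\M_d(\mathbb C)$ into the matrix algebra $\M_{d'}(\mathbb C)$, the Arveson extension theorem yields a ucp map $\Phi:\M_d(\mathbb C)\rightarrow\M_{d'}(\mathbb C)$ extending $\phi$; in particular $\Phi(u)=u'$ and $\Phi(v)=v'$. This is precisely the assertion that $(u,v)$ is universal for $p$-th order unitary pairs obeying the Weyl commutation relation. (Alternatively, one can bypass the transitivity step: extend the map $\mathcal O_{(u,v)}\to\mathcal O_{(\mathfrak u,\mathfrak v)}\subseteq\M_p(\mathbb C)$ of Theorem~\ref{pairs} to a ucp map $\M_d(\mathbb C)\to\M_p(\mathbb C)$ by Arveson, and post-compose with the ucp map $\M_p(\mathbb C)\to\M_{d'}(\mathbb C)$ supplied by Corollary~\ref{main result Weyl}.)

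There is no substantive obstacle here, since the hard content is already contained in Lemma~\ref{l2}, Proposition~\ref{m=2 iso}, and Theorem~\ref{pairs}. The only points demanding (routine) care are the verification that $\simeq_{\rm ord}$ is transitive in this finite-dimensional matrix setting and the bookkeeping confirming that the Arveson extension $\Phi$ still maps $u\mapsto u'$ and $v\mapsto v'$.
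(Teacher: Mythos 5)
Your proof is correct and follows exactly the route the paper intends: the paper states Corollary~\ref{pairs2} without proof, expecting the reader to combine Theorem~\ref{pairs} (every Weyl pair is completely order equivalent to $(\mathfrak u,\mathfrak v)$, hence any two such pairs are completely order equivalent to each other) with the Arveson extension argument already used in Corollary~\ref{main result Weyl}. Nothing further is needed.
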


Our final observation is that if the Weyl commutation relations are satisfied by $p$-th order
$p\times p$ unitaries, then these unitaries must be unitarily equivalent to the Weyl unitaries.

\begin{corollary}\label{pairs} If two $p$-th order $p\times p$
unitary matrices $u$ and $v$ satisfy the Weyl commutation relation $uv=\zeta vu$, then
$( u,  v)\simeq_{\rm u}(\mathfrak u,\mathfrak v)$.
\end{corollary}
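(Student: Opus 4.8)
The plan is to reduce, via Lemma~\ref{l1}, to the normal form of equation~(\ref{matrix form}) and then to conjugate by an explicit diagonal unitary. Since $d=p$ here, Lemma~\ref{l1} applied with $n=1$ produces a unitary $y\in\M_p(\mathbb C)$ with $y^*uy=\tilde u$ and $y^*vy=\tilde v$, where the matrix $\tilde u$ of~(\ref{matrix form}) collapses to $\mathfrak u$ when $n=1$, and $\tilde v$ is the weighted cyclic shift whose nonzero entries are the unimodular scalars $v_2,\dots,v_p$ along the subdiagonal together with $v_1=v_2^*v_3^*\cdots v_p^*$ in the $(1,p)$ position. Conjugation by $y$ is a $*$-automorphism of $\M_p(\mathbb C)$ carrying the pair $(u,v)$ to $(\tilde u,\tilde v)$, so it suffices to produce a unitary $w\in\M_p(\mathbb C)$ with $w^*\tilde u w=\mathfrak u$ and $w^*\tilde v w=\mathfrak v$; the composite $yw$ then implements $(u,v)\simeq_{\rm u}(\mathfrak u,\mathfrak v)$.

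For the conjugating unitary I would take $w$ to be diagonal, namely $w=\operatorname{diag}(d_1,\dots,d_p)$ with $d_1=1$ and $d_k=v_kv_{k-1}\cdots v_2$ for $k=2,\dots,p$. Each $d_k$ is unimodular, so $w$ is unitary, and being diagonal it commutes with $\tilde u=\mathfrak u$; hence $w^*\tilde u w=\mathfrak u$ at once. For $\tilde v$, the $(k,k-1)$-entry of $w^*\tilde v w$ is $\overline{d_k}\,v_k\,d_{k-1}$, which equals $1$ by the choice of the $d_k$, and the wrap-around $(1,p)$-entry is $\overline{d_1}\,v_1\,d_p=v_1(v_pv_{p-1}\cdots v_2)$. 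The one remaining point is that this last scalar must equal $1$: since the $v_k$ are scalars, $v_pv_{p-1}\cdots v_2=v_2v_3\cdots v_p$, and by the relation $v_1=v_2^*v_3^*\cdots v_p^*$ furnished by Lemma~\ref{l1} we obtain $v_1(v_2v_3\cdots v_p)=1$. Therefore $w^*\tilde v w=\mathfrak v$, which finishes the argument.

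I expect the only delicate point to be the bookkeeping around the wrap-around entry $(1,p)$: one must verify that requiring $w^*\tilde v w=\mathfrak v$ does not over-determine the $d_k$, i.e.\ that the last equation is automatically satisfied once $d_1,\dots,d_p$ have been chosen to normalise the subdiagonal. As indicated above, this is exactly guaranteed by the constraint $v_1=v_2^*\cdots v_p^*$, which is itself a consequence of $\tilde v^{\,p}=1_p$. Alternatively, one may argue non-constructively: by Corollary~\ref{weyl irr}, $\mbox{\rm Alg}(\{\tilde u,\tilde v\})=\M_p(\mathbb C)$, so the $*$-isomorphism $\rho$ of Proposition~\ref{m=2 iso} is a $*$-automorphism of $\M_p(\mathbb C)$, hence of the form $\operatorname{Ad} w$ for some unitary $w$; since $\rho(\mathfrak u)=\tilde u$ and $\rho(\mathfrak v)=\tilde v$, the desired unitary equivalence follows immediately.
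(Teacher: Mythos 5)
Your proof is correct, but it follows a genuinely different route from the paper's. The paper deduces the result ``softly'' from the complete order equivalence of Theorem~\ref{pairs}: it extends $\phi$ and $\phi^{-1}$ to ucp maps of $\M_p(\mathbb C)$, uses irreducibility of $\mathcal O_{(\mathfrak u,\mathfrak v)}$ and Arveson's Boundary Theorem to show the composite is the identity, and then invokes Wigner's Theorem to conclude that a ucp map of $\M_p(\mathbb C)$ with completely positive inverse is a unitary similarity. You instead work constructively: after reducing to the normal form of Lemma~\ref{l1} with $n=1$, you exhibit the conjugating unitary explicitly as the diagonal matrix $w=\operatorname{diag}(d_1,\dots,d_p)$ with $d_1=1$ and $d_k=v_k\cdots v_2$, and your verification is sound --- in particular the wrap-around entry $\overline{d_1}\,v_1\,d_p=v_1v_2\cdots v_p=1$ is exactly where the constraint $v_1=v_2^*\cdots v_p^*$ (equivalently $\tilde v^{\,p}=1_p$) is consumed, so the system is not over-determined. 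Your fallback argument via Corollary~\ref{weyl irr} and Proposition~\ref{m=2 iso} (every $*$-automorphism of $\M_p(\mathbb C)$ is inner) is also valid and is still more elementary than the paper's. What your approach buys is an explicit intertwining unitary and independence from the boundary-theorem machinery; what the paper's approach buys is a template that generalises to situations where no normal form is available, which is the technique reused in Proposition~\ref{three is bad} and Theorem~\ref{matrix extreme point}.
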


\begin{proof} By hypothesis, there is a unital completely positive bijection 
$\phi:\mathcal O_{(\mathfrak u,\mathfrak v)}\rightarrow \mathcal O_{( u,  v)}$ in which $\phi(\mathfrak u)=u$,
$\phi(\mathfrak v)=v$, and $\phi^{-1}$ is completely positive. Let $\Phi$ and $\Phi_1$ be extensions 
of $\phi$ and $\phi^{-1}$, respectively, to ucp maps $\M_p(\mathbb C)\rightarrow\M_p(\mathbb C)$.
The ucp map $\Phi_1\circ\Phi$ fixes every element of the irreducible operator system 
$\mathcal O_{(\mathfrak u,\mathfrak v)}$, and so by Arveson's Boundary Theorem
\cite{arveson1972,farenick2011b}, $\Phi_1\circ\Phi$ is the identity map of $\M_p(\mathbb C)$.
Hence, $\Phi$ is a ucp map of $\M_p(\mathbb C)$ with a completely positive inverse, which by Wigner's Theorem
implies that $\Phi$ is a unitary similarity transformation $x\mapsto w^*xw$ for some unitary $w\in\M_p(\mathbb C)$.
\end{proof}

\section{Weyl-Brauer Unitaries}

Consider the Weyl triple $\mathfrak W=(\omega_a, \omega_a , \omega_c)$ of  $p\times p$ unitary matrices, where
\[
\omega_a=\mathfrak u, \quad \omega_b=\zeta^{\frac{p-1}{2}}\mathfrak u \mathfrak v , \quad \omega_c= \mathfrak v,
\]
and where $\mathfrak u$ and $\mathfrak v$ are the Weyl unitary matrices. Recall that
the triple $\mathfrak W=(\omega_a, \omega_a , \omega_c)$ satisfies the simple Weyl commutation relations.

Observe that $\mathfrak v=\zeta^{\frac{1-p}{2}}\mathfrak u^{p-1}\mathfrak w$, and so 
$\mathfrak v$ is in the associative algebra generated by $\omega_a$ and $\omega_b$. 
Because $\M_p(\mathbb C)$ is generated as an algebra by $\mathfrak u$ and $\mathfrak v$,
this means that the algebra generated $\omega_a$ and $\omega_b$ is also $\M_p(\mathbb C)$. 
Hence, 
\[
\mbox{\rm Alg}\left( \mathcal Q_{1,-}  \right) = \mbox{\rm Alg}\left( \mathcal Q_{1,-}  \right) =\M_p(\mathbb C),
\]
where
$\mathcal Q_1=\{\omega_a,\omega_b,\omega_c\}$ and $\mathcal Q_{1,-}=\{\omega_a,\omega_b\}$ .
       
As in \cite[Definition 6.63]{Watrous-book} and 
by adapting the method of the proof of Theorem 4.3 
in \cite{farenick--huntinghawk--masanika--plosker2021}, 
we shall invoke an iteration whereby we produce, from $m$ invertible matrices $x_1,\dots,x_m$, a set of $m+2$ invertible matrices:
\[
x_1\otimes 1_p,\dots,x_{m-1}\otimes 1_p, x_m\otimes \omega_a, x_m\otimes \omega_b, x_m\otimes \omega_c.
\]
Specifically, in taking $x_1$, $x_2$, and $x_3$ to be $\omega_a$, $\omega_b$, and $\omega_c$, respectively, the iteration yields a set
$\mathcal Q_2\subset\M_p(\mathbb C)\otimes\M_p(\mathbb C)$ of $5$ elements:
\[
\begin{array}{rcl}
\mathcal Q_2&=&\{ \omega_a\otimes 1,  \omega_b\otimes 1, \omega_c\otimes \omega_a, \omega_c\otimes \omega_b, \omega_c\otimes \omega_c\} \\
&=& \mathcal Q_{2,-} \cup \{\omega_c\otimes \omega_c\},
\end{array}
\]
where $\mathcal Q_{2,-} =\mathcal Q_2 \setminus \{\omega_c\otimes \omega_c\}$. The matrices in $\mathcal Q_2$ satisfy the Weyl commutation relations $\tilde u\tilde v=\zeta \tilde v\tilde u$ when
$\tilde u$ is selected before $\tilde v$ in $\mathcal Q_2$ and the set $\mathcal Q_2$ is considered as an ordered list.

Another iteration of the construction generates a set $\mathcal Q_3$ consisting of $7$ elements:
\[
\mathcal Q_3= \mathcal Q_{3,-} \cup \{\omega_c\otimes \omega_c\otimes\omega_c\},
\]
where
\[
\mathcal Q_{3,-}=\{  
\omega_a\otimes 1 \otimes 1 ,
 \omega_b\otimes 1 \otimes 1 ,
 \omega_c\otimes \omega_a \otimes 1,
 \omega_c\otimes \omega_b \otimes 1, 
 \omega_c\otimes \omega_c \otimes \omega_a ,
\omega_c\otimes \omega_c \otimes \omega_b\}.
\]
Once again, the matrices in $\mathcal Q_3$ satisfy the Weyl commutation relations $\tilde u\tilde v=\zeta \tilde v\tilde u$ when
$\tilde u$ is selected before $\tilde v$ in $\mathcal Q_3$ and the set $\mathcal Q_3$ is considered as an ordered list.

Repeated iteration produces, for each positive integer $k$, a set $\mathcal Q_{k,-}$ of cardinality $2k$ and a 
set $\mathcal Q_k$ with one additional element, namely
\[
\mathcal Q_k= \mathcal Q_{k,-} \cup \left\{\bigotimes_1^k\omega_c \right\},
\]
such that the matrices in $\mathcal Q_k$ satisfy the Weyl commutation relations  $\tilde u\tilde v=\zeta \tilde v\tilde u$ when
$\tilde u$ is selected before $\tilde v$ in $\mathcal Q_k$ and the set $\mathcal Q_k$ is considered as an ordered list.

The $2k$ elements of $\mathcal Q_{k,-}$ consist of $k$ pairs such that, in the order given by the iterative construction,
the product of each pair is a product tensor in which all factors are the identity matrix and one tensor factor is $\omega_a\omega_b$. More specifically,
if
\[
\mathcal Q_{k,-}=\{z_1,z_2,z_3,z_4,\dots,z_{2k-1},z_{2k}\} \subset \bigotimes_1^k \M_p(\mathbb C),
\]
then
\[
\begin{array}{rcl}
z_1z_2&=&(\omega_a\omega_b)\otimes 1 \otimes 1 \cdots \otimes 1 = \zeta^{\frac{1-p}{2}}\left( \omega_c \otimes 1 \otimes 1 \cdots \otimes 1\right) \\
z_3z_4&=&1 \otimes (\omega_a\omega_b)\otimes 1\cdots\otimes 1 = \zeta^{\frac{1-p}{2}}\left( 1 \otimes \omega_c\otimes 1\cdots \otimes 1\right) \\
\vdots &=& \vdots \\
z_{2k-1}z_{2k}&=& 1 \otimes 1 \otimes 1 \cdots \otimes (\omega_a\omega_b)= \zeta^{\frac{1-p}{2}} \left(1 \otimes 1 \otimes 1 \otimes 1 \cdots \otimes \omega_c\right). 
\end{array}
\]
Hence,
\[
\bigotimes_1^k\omega_c = (\zeta^{\frac{1-p}{2}})^{-k} \displaystyle\prod_{j=1}^k w_{2j-1}w_{2j} \in \mbox{\rm Alg}\left(\mathcal Q_{k,-} \right) ,
\]
which shows that
\[
\mbox{\rm Alg} ({\mathcal Q_{k,-}}) =\mbox{\rm Alg} ({\mathcal Q_k}),
\]
for every $k\in \mathbb N$. 

We now show that $\mbox{\rm Alg} ({\mathcal Q_{k,-}}) =\mbox{\rm Alg} ({\mathcal Q_k})=\displaystyle\bigotimes_1^k\M_p(\mathbb C)$. Of course, it is sufficient to show this
for  $\mbox{\rm Alg} ({\mathcal Q_{k,-}})$.
The claim holds for $k=1$ because  $\mathcal Q_{1,-}=\{\omega_a,\omega_b\}$ generates $\M_p(\mathbb C)$. Looking at the case $k=2$, 
\[
\mathcal Q_{2,-}=\{ \omega_a\otimes 1,  \omega_b\otimes 1, \omega_c\otimes \omega_a, \omega_c\otimes \omega_b \} .
\]
The algebra generated by $\{\omega_a\otimes 1,  \omega_b\otimes 1\}$ consists of all matrices of the form $s\otimes 1$, for $s\in\M_p(\mathbb C)$, whereas the
algebra generated by $\{\omega_c\otimes \omega_a, \omega_c\otimes \omega_a\}$ consists of all matrices of the form $\omega_c\otimes t$, for $t\in\M_p(\mathbb C)$. 
Because the set of all products of matrices of these two types is the set of all elementary tensors in $\M_p(\mathbb C)\otimes \M_p(\mathbb C)$, we see that the claims holds for $k=2$. In general, using induction,
if we consider matrices of the form given by the construction
\[
x_1\otimes 1_p,\dots,x_{m-1}\otimes 1_p, x_m\otimes \omega_a, x_m\otimes \omega_b, x_m\otimes \omega_c,
\]
where the algebra generated by invertible matrices $x_1 ,\dots,x_{m-1}$ is a full matrix algebra $\M_d(\mathbb C)$, then our argument here shows that 
$x_1\otimes 1_p,\dots,x_{m-1}\otimes 1_p$ generate matrices of the form $s\otimes 1_p$ while $x_m\otimes \omega_a$ and $ x_m\otimes \omega_b$ generate
all matrices of the form $x_m\otimes t$. Thus, collectively, these matrices generate $\M_d(\mathbb C)\otimes \M_p(\mathbb C)$.

The construction above proves the following theorem.

\begin{theorem}\label{tensor constr} For every positive integer $k$ there exist $p$-th order unitaries $u_1,\dots,u_{2k+1}$ that satisfy the
simple Weyl commutation relations and are such that
\[
\mbox{\rm Alg}(\{u_1,\dots,u_{2k}\})=\mbox{\rm Alg}(\{u_1,\dots,u_{2k}, u_{2k+1}\})=\displaystyle\bigotimes_1^k\M_p(\mathbb C).
\]
\end{theorem}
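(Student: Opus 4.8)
The plan is to turn the iterative tensor construction carried out just before the statement into an honest induction and then verify, in turn, the three claims: that the $u_i$ are $p$-th order unitaries, that they satisfy the simple Weyl commutation relations in the stated order, and that $u_1,\dots,u_{2k}$ already generate $\bigotimes_1^k\M_p(\mathbb C)$. First I would set $(u_1,u_2,u_3)=(\omega_a,\omega_b,\omega_c)$ and, given an ordered $(2k+1)$-tuple $(u_1,\dots,u_{2k+1})$ with $u_{2k+1}=\bigotimes_1^k\omega_c$, pass to the ordered $(2k+3)$-tuple
\[
u_1\otimes 1_p,\ \dots,\ u_{2k}\otimes 1_p,\ u_{2k+1}\otimes\omega_a,\ u_{2k+1}\otimes\omega_b,\ u_{2k+1}\otimes\omega_c ,
\]
whose last entry is again of the required form $\bigotimes_1^{k+1}\omega_c$. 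Since $\omega_a=\mathfrak u$ and $\omega_c=\mathfrak v$ satisfy $\mathfrak u^p=\mathfrak v^p=1_p$ while $\omega_b^p=\zeta^{\frac{(p-1)p}{2}}(\mathfrak u\mathfrak v)^p=1_p$ by the computation in the introduction, each tensor factor of each $u_i$ is a $p$-th order unitary, and a tensor product of $p$-th order unitaries is again one; this settles the first claim.

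For the commutation relations I would show, by induction on $k$, that $u_iu_j=\zeta u_ju_i$ whenever $i<j$ in the ordering above. The base case $(\omega_a,\omega_b,\omega_c)$ amounts to the three identities $\omega_a\omega_b=\zeta\omega_b\omega_a$, $\omega_a\omega_c=\zeta\omega_c\omega_a$, $\omega_b\omega_c=\zeta\omega_c\omega_b$, each of which drops out of $\mathfrak u\mathfrak v=\zeta\mathfrak v\mathfrak u$ together with the definition $\omega_b=\zeta^{\frac{p-1}{2}}\mathfrak u\mathfrak v$. In the inductive step a pair of entries of the new tuple falls into one of three types: both of the form $x\otimes 1_p$; one of the form $x\otimes 1_p$ paired with a later entry $u_{2k+1}\otimes\omega$; or both of the form $u_{2k+1}\otimes\omega$, $u_{2k+1}\otimes\omega'$ with $\omega$ preceding $\omega'$ in $(\omega_a,\omega_b,\omega_c)$. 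In the first two cases multiplicativity of $\otimes$ reduces the relation to the corresponding relation in the old tuple (note that $x$ precedes $u_{2k+1}$, the last entry of the old tuple), and in the third case it reduces to the base case in the second tensor leg, the first leg contributing $u_{2k+1}^2$ to both products; in every case exactly the scalar $\zeta$ appears.

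The algebra statement I would obtain by a ``peel one tensor slot at a time'' induction, proving $\M_p(\mathbb C)^{\otimes j}\otimes 1_p^{\otimes(k-j)}\subseteq\mathrm{Alg}(\{u_1,\dots,u_{2j}\})$ for $j=1,\dots,k$. Here $u_{2j-1}=\mathfrak v^{\otimes(j-1)}\otimes\mathfrak u\otimes 1_p^{\otimes(k-j)}$ and $u_{2j}=\zeta^{\frac{p-1}{2}}\mathfrak v^{\otimes(j-1)}\otimes\mathfrak u\mathfrak v\otimes 1_p^{\otimes(k-j)}$; multiplying them on the left by $(\mathfrak v^{p-1})^{\otimes(j-1)}\otimes 1_p^{\otimes(k-j+1)}$, which the inductive hypothesis for $j-1$ puts in $\mathrm{Alg}(\{u_1,\dots,u_{2j-2}\})$, clears the $\mathfrak v$-prefix and yields $1_p^{\otimes(j-1)}\otimes\mathfrak u\otimes 1_p^{\otimes(k-j)}$ and $\zeta^{\frac{p-1}{2}}1_p^{\otimes(j-1)}\otimes\mathfrak u\mathfrak v\otimes 1_p^{\otimes(k-j)}$. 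These two generate $1_p^{\otimes(j-1)}\otimes\M_p(\mathbb C)\otimes 1_p^{\otimes(k-j)}$, because $\mathfrak u^{p-1}(\mathfrak u\mathfrak v)=\mathfrak v$ and then $\mathrm{Alg}(\{\mathfrak u,\mathfrak v\})=\M_p(\mathbb C)$ by Schwinger's theorem, recalled in the introduction. Multiplying this new copy of $\M_p(\mathbb C)$ in the $j$-th leg against $\M_p(\mathbb C)^{\otimes(j-1)}\otimes 1_p^{\otimes(k-j+1)}$ recovers $\M_p(\mathbb C)^{\otimes j}\otimes 1_p^{\otimes(k-j)}$. At $j=k$ this gives $\bigotimes_1^k\M_p(\mathbb C)\subseteq\mathrm{Alg}(\{u_1,\dots,u_{2k}\})$, the reverse inclusion being trivial; and since $u_{2k+1}=\mathfrak v^{\otimes k}\in\bigotimes_1^k\M_p(\mathbb C)$, adjoining $u_{2k+1}$ does not enlarge the algebra.

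No single step is deep; the obstacle is bookkeeping. In the commutation induction one must keep the ordering and the precise scalar $\zeta$ straight across the three cases, and in the algebra induction the one move that is not purely formal is recognising that the inverse $\mathfrak v^{p-1}$ of the $\omega_c$-prefix is already available from the earlier generators and is exactly what exposes an untwisted pair $\mathfrak u$, $\mathfrak u\mathfrak v$ in the next tensor slot; the base of that induction, that $u_1=\mathfrak u\otimes 1_p^{\otimes(k-1)}$ and $u_2=\zeta^{\frac{p-1}{2}}\mathfrak u\mathfrak v\otimes 1_p^{\otimes(k-1)}$ generate $\M_p(\mathbb C)\otimes 1_p^{\otimes(k-1)}$, is the same Schwinger-type observation.
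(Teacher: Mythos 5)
Your proposal is correct and follows essentially the same route as the paper: the same iterative tensor construction $x_1\otimes 1_p,\dots,x_{m-1}\otimes 1_p,\,x_m\otimes\omega_a,\,x_m\otimes\omega_b,\,x_m\otimes\omega_c$, the same case-by-case verification of the simple Weyl relations, and the same induction on tensor slots for the generated algebra. If anything, your ``clear the $\mathfrak v$-prefix by left-multiplying with $(\mathfrak v^{p-1})^{\otimes(j-1)}\otimes 1$'' step is a more careful rendering of the paper's inductive step, which loosely asserts that $\{\omega_c\otimes\omega_a,\omega_c\otimes\omega_b\}$ generates all matrices of the form $\omega_c\otimes t$ (not literally true of the algebra they generate, though the intended conclusion is the one you reach).
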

 
As mentioned in \cite{farenick--huntinghawk--masanika--plosker2021}, it is often the case that
spin systems arise in quantum theory; for this reason, the following modification of the construction above is worth
a brief mention.

\begin{theorem} If $\H$ is an infinite-dimensional complex Hilbert space, then there exists a countable sequence
$\{u_n\}_{n\in\mathbb N}$ of $p$-th order unitary operators $u_n$ such that $u_ku_\ell=\zeta u_\ell u_k$ whenever
$k<\ell$ and such that the norm-closed algebra generated by the sequence $\{u_n\}_{n\in\mathbb N}$ is isomorphic
to the C$^*$-algebra $\displaystyle\bigotimes_1^\infty \M_p(\mathbb C)$.
\end{theorem}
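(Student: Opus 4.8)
The plan is to adapt the finite-dimensional construction of Theorem \ref{tensor constr} to the infinite tensor product setting, taking norm closures at each stage and appealing to the universal (inductive-limit) property of the infinite C$^*$-tensor product $\bigotimes_1^\infty\M_p(\mathbb C)$. First I would fix an isomorphism $\H\cong\bigotimes_1^\infty\CC^p$ (using that both are separable infinite-dimensional Hilbert spaces), so that the concrete UHF algebra $\mathfrak A=\bigotimes_1^\infty\M_p(\mathbb C)$ acts on $\H$; recall that $\mathfrak A$ is the norm closure of the increasing union $\bigcup_{k\in\NN}\bigl(\bigotimes_1^k\M_p(\mathbb C)\otimes 1\otimes 1\otimes\cdots\bigr)$. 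Then, for each $k$, I would take the finite list $z_1,\dots,z_{2k}\in\bigotimes_1^k\M_p(\mathbb C)$ produced by the iteration preceding Theorem \ref{tensor constr} and view it inside $\mathfrak A$ via the canonical embedding (padding with identity operators in the remaining tensor slots). The iterative construction is designed precisely so that the list for stage $k$ is an initial segment of the list for stage $k+1$; hence these finite lists fit together into a single countable sequence $\{u_n\}_{n\in\NN}$ of $p$-th order unitary operators on $\H$.

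The commutation relations are inherited for free: by the discussion preceding Theorem \ref{tensor constr}, within each $\mathcal Q_{k,-}$ one has $u_ku_\ell=\zeta u_\ell u_k$ whenever $k<\ell$ in the given order, and embedding into more tensor factors (tensoring with identities) does not disturb these relations, so the simple commutation relations $u_ku_\ell=\zeta u_\ell u_k$ for $k<\ell$ hold for the whole sequence. It remains to identify the norm-closed algebra $\mathcal B$ generated by $\{u_n\}_{n\in\NN}$. On one hand, each $u_n$ lies in some $\bigotimes_1^k\M_p(\mathbb C)\subset\mathfrak A$, so $\mathcal B\subseteq\mathfrak A$. On the other hand, Theorem \ref{tensor constr} gives that the $(2k)$-element set $\{u_1,\dots,u_{2k}\}$ generates, as an algebra, the full matrix algebra $\bigotimes_1^k\M_p(\mathbb C)$ sitting in the $k$-th stage of $\mathfrak A$; since this holds for every $k$ and the union of these stages is norm-dense in $\mathfrak A$, the norm-closed algebra generated by the full sequence contains $\mathfrak A$. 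Therefore $\mathcal B=\mathfrak A\cong\bigotimes_1^\infty\M_p(\mathbb C)$. (As in the finite case, $u_1,\dots,u_{2k}$ already suffice; the ``extra'' generators $\bigotimes_1^k\omega_c$ are redundant, since $\mbox{\rm Alg}(\mathcal Q_{k,-})=\mbox{\rm Alg}(\mathcal Q_k)$.)

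The one point requiring a little care — and the main obstacle — is the interface between the purely algebraic statement of Theorem \ref{tensor constr} and the norm-closure in the infinite-dimensional setting: one must check that taking norm closures commutes appropriately with the inductive limit, i.e.\ that $\overline{\bigcup_k \mbox{\rm Alg}(\{u_1,\dots,u_{2k}\})}^{\,\|\cdot\|}=\overline{\bigcup_k \bigl(\bigotimes_1^k\M_p(\mathbb C)\bigr)}^{\,\|\cdot\|}=\mathfrak A$. This is standard: the left-hand union equals $\bigcup_k\bigl(\bigotimes_1^k\M_p(\mathbb C)\bigr)$ by Theorem \ref{tensor constr}, and its norm closure is by definition the UHF algebra $\bigotimes_1^\infty\M_p(\mathbb C)$; since $\mathcal B$ is norm-closed and contains this dense union, and is contained in $\mathfrak A$, it equals $\mathfrak A$. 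I would also remark that each $u_n$ is genuinely unitary (a tensor product of unitaries is unitary) and of $p$-th order, since $\omega_a^p=\omega_b^p=\omega_c^p=1_p$ and tensoring with identities preserves this. This completes the argument.
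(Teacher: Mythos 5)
Your proposal is correct and takes essentially the same route as the paper: realise $\H$ as $\bigotimes_1^\infty\mathbb{C}^p$, embed the Weyl--Brauer unitaries of Theorem \ref{tensor constr} into the infinite tensor product by padding with identity factors, and identify the norm closure of the increasing union of the algebras $\bigotimes_1^k\M_p(\mathbb{C})$ with the UHF algebra $\bigotimes_1^\infty\M_p(\mathbb{C})$. The only difference is cosmetic --- you spell out the inductive-limit bookkeeping that the paper leaves implicit.
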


\begin{proof} 
Let $\H=\displaystyle\bigotimes_1^\infty  \mathbb C^p$, which is the direct limit of the finite-dimensional Hilbert
spaces $\H_k=\displaystyle\bigotimes_1^k \mathbb C^p$. On each $\H_k$ construction the Weyl-Brauer unitaries,
and then form the tensor product of these unitaries with infinitely many copies of the $p\times p$ identity matrix
so as to produce unitary operators $u_1,\dots,u_{2k}$ on $\H$ of order $p$ that satisfy the simple Weyl commutation 
relations. This construction also shows that the algebra $\A_k$ generated by $u_1,\dots,u_{2k}$ is isomorphic to
$\displaystyle\bigotimes_1^k\M_p(\mathbb C)$ and that $\A_k$ is a unital subalgebra of $\A_{k+1}$. Hence, the 
norm-closed algebra generated by $\{u_n\}_{n\in\mathbb N}$ coincides with the norm-closure of
$\displaystyle\bigcup_{k\in\mathbb N}\A_k$, which is precisely 
$\displaystyle\bigotimes_1^\infty \M_p(\mathbb C)$.
\end{proof}
 
\section{Weyl-Brauer Unitaries Are Not Universal}

Although the Pauli-Weyl-Brauer matrices are universal 
for selfadjoint anticommuting unitaries \cite{farenick--huntinghawk--masanika--plosker2021}, 
the analogous result fails for $p>2$.

\begin{proposition}\label{three is bad} Assume that $p\geq 3$ and let 
$\mathfrak W=(\omega_a,\omega_b, \omega_c)$ denote the triple of simple Weyl-Brauer matrices. 
Let
$x,y,z\in\M_p(\mathbb C)$ be the $p$-th order unitary matrices given by 
$x=\omega_a$, $z=\omega_c$, and 
\begin{equation}\label{e:w}
y=
\left[ \begin{array}{ccccccc} 0 &&&&&& 1\\ \zeta^2 & 0 &&&&& \\ 
&\zeta^3& 0 && &&\\ 
&&\ddots& \ddots &&& \\ &&&\zeta^{p-2}&0 && \\
&&&&\zeta^{p-1}&\ddots & \\ 
&&&&&\zeta^{(1-p)/2} & 0 \end{array} \right].
\end{equation}
The triple $(x,y,z)$ satisfies the simple Weyl commutation relations, but 
there does not exist any unital completely positive linear map 
$\phi:\M_p(\mathbb C)\rightarrow\M_p(\mathbb C)$
in which $\phi(\omega_a)=x$, $\phi(\omega_b)=y$, and $\phi(\omega_c)=z$.
\end{proposition}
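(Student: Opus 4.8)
The plan is to treat the two assertions separately: first, that the triple $(x,y,z)$ satisfies the simple Weyl commutation relations, and second --- the substantive claim --- that any unital completely positive $\phi$ as in the statement would be forced to be the identity map, which does not carry $\omega_b$ to $y$.

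For the first assertion I would note that $x=\omega_a=\mathfrak u$ and $z=\omega_c=\mathfrak v$, so $x^p=z^p=1_p$ and $xz=\zeta zx$ are just the original Weyl relations for $\mathfrak u$ and $\mathfrak v$ and require no argument; what is left is to verify $y^p=1_p$, $xy=\zeta yx$, and $yz=\zeta zy$ from the explicit matrix \eqref{e:w}. Since every nonzero entry of $y$ occupies one of the positions $(k+1,k)$ or $(1,p)$, conjugating $y$ by $\mathfrak u=\operatorname{diag}(1,\zeta,\dots,\zeta^{p-1})$ multiplies it by $\zeta$ (giving $xy=\zeta yx$), conjugating $y$ by the cyclic shift $\mathfrak v$ cyclically permutes its $p$ ``weights'' and one checks the result is $\zeta^{-1}y$ (giving $yz=\zeta zy$), and $y^p$ is the scalar matrix whose entry is the product of those weights, which is $1$. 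I would also record here that $y\neq\omega_b$, which is visible from a single entry: the $(1,p)$-entry of $y$ is $1$, while that of $\omega_b=\zeta^{(p-1)/2}\mathfrak u\mathfrak v$ is $\zeta^{(p-1)/2}\neq1$ (as $1\le(p-1)/2\le p-1$).

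For the second assertion, assume toward a contradiction that $\phi:\M_p(\mathbb C)\to\M_p(\mathbb C)$ is ucp with $\phi(\omega_a)=x$, $\phi(\omega_b)=y$, $\phi(\omega_c)=z$. Then $\phi$ fixes the unitaries $\mathfrak u$ and $\mathfrak v$, since $x=\omega_a=\mathfrak u$ and $z=\omega_c=\mathfrak v$. Because $\phi$ is unital and $\mathfrak u$ is unitary, $\phi(\mathfrak u^*\mathfrak u)=1_p=\phi(\mathfrak u)^*\phi(\mathfrak u)$ and $\phi(\mathfrak u\mathfrak u^*)=1_p=\phi(\mathfrak u)\phi(\mathfrak u)^*$, so $\mathfrak u$ --- and, likewise, $\mathfrak v$ --- lies in the multiplicative domain of $\phi$. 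The multiplicative domain is a unital $*$-subalgebra of $\M_p(\mathbb C)$, hence contains $\operatorname{Alg}(\{\mathfrak u,\mathfrak v\})=\M_p(\mathbb C)$ (Corollary \ref{weyl irr}); therefore $\phi$ is multiplicative, i.e.\ a unital $*$-endomorphism of the simple finite-dimensional algebra $\M_p(\mathbb C)$, hence an automorphism $\phi=\operatorname{Ad}(w)$ for a unitary $w$; and since $\phi$ fixes $\mathfrak u$ and $\mathfrak v$, the unitary $w$ lies in $\{\mathfrak u,\mathfrak v\}'=\mathbb C 1_p$ (Corollary \ref{weyl irr}), so $\phi=\operatorname{id}$. (Equivalently, $\phi$ fixes the irreducible operator system $\mathcal O_{(\mathfrak u,\mathfrak v)}$ pointwise, so $\phi=\operatorname{id}$ by Arveson's Boundary Theorem, as in the argument preceding Section 3.) But then $y=\phi(\omega_b)=\omega_b$, contradicting $y\neq\omega_b$, so no such $\phi$ exists.

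The calculations in the first paragraph need care but are routine; the real content, and the reason Weyl--Brauer universality breaks down once $p>2$, is the rigidity in the second paragraph: fixing $\omega_a$ and $\omega_c$ under a ucp self-map of $\M_p(\mathbb C)$ pins down the image of $\omega_b$ completely, so the genuine $p$-th order unitary $y$ of \eqref{e:w} --- which obeys all the simple Weyl relations yet differs from $\omega_b$ --- is an explicit obstruction to universality.
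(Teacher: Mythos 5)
Your treatment of the second, substantive assertion is correct and takes a genuinely different route from the paper's. The paper observes that the fixed-point set of $\phi$ is an operator system containing $\mathfrak u$ and $\mathfrak v$, hence irreducible, and then invokes Arveson's Boundary Theorem to force $\phi=\operatorname{id}$. You instead use Choi's multiplicative-domain lemma: since $\phi$ is ucp and carries the unitaries $\mathfrak u,\mathfrak v$ to unitaries, they lie in the multiplicative domain, which is a $*$-subalgebra and hence all of $\M_p(\mathbb C)$ by Corollary \ref{weyl irr}; thus $\phi$ is an inner automorphism fixing $\mathfrak u$ and $\mathfrak v$ and therefore the identity. This trades the boundary theorem for a more elementary rigidity argument, at the price of using the full generated algebra rather than just the operator system; both are valid. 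You also locate the contradiction in the right place: the paper's proof ends with ``$\phi(\omega_a)=x\neq\omega_a$,'' which is a slip since $x=\omega_a$ by definition, whereas the genuine obstruction is $\phi(\omega_b)=y\neq\omega_b$, and your comparison of $(1,p)$-entries makes that explicit.

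The first paragraph, however, should not be waved through, because the verification you describe actually fails for the matrix as displayed. Writing $y=D\mathfrak v$ with $D=\operatorname{diag}(d_1,\dots,d_p)$, where $d_1=y_{1p}$ and $d_{k+1}=y_{k+1,k}$, the relation $y\mathfrak v=\zeta\mathfrak v y$ forces the cyclic recursion $d_{k+1}=\zeta d_k$, so with $d_1=1$ the subdiagonal entries must be $\zeta,\zeta^2,\dots,\zeta^{p-1}$ in order; the entries printed in (\ref{e:w}) are $\zeta^2,\zeta^3,\dots,\zeta^{p-1},\zeta^{(1-p)/2}$, which violate the recursion, and already for $p=3$ one computes $y^3=(d_1d_2d_3)1_3=\zeta\,1_3\neq1_3$. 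So your claims that conjugation by $\mathfrak v$ returns $\zeta^{-1}y$ and that the product of the weights is $1$ are not true of the displayed matrix. This is evidently a typographical defect in the statement (the intended $y$ is a scalar multiple $\lambda\,\mathfrak u\mathfrak v$ with $\lambda^p=\zeta^{p(p-1)/2}$ and $\lambda\neq\zeta^{(p-1)/2}$, for which your second paragraph applies verbatim), but since your proof purports to verify the relations for the matrix as given, you should carry out the computation; doing so would have exposed the discrepancy and shown exactly how to repair the entries.
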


\begin{proof} If $\lambda_p$ denotes the $(p,p-1)$-entry of $y$, then the condition $y^p=1_p$ implies
that
\[
\lambda_p=\left(1\cdot\zeta\cdot\zeta^2\cdots\zeta^{p-1}\right)^{-1}
= \zeta^{ -\sum_{k=1}^{p-1}k} =\zeta^{(1-p)/2};
\]
furthermore,
matrix multiplication confirms that the triple $(x,y,z)$ satisfies the simple 
Weyl commutation relations. Hence, $(x,y,z)$
is a simple Weyl triple.

Assume that a
unital completely positive linear map 
$\phi:\M_p(\mathbb C)\rightarrow\M_p(\mathbb C)$
in which $\phi(\omega_a)=x$, $\phi(\omega_b)=y$, and $\phi(\omega_c)=z$ does exist.
The fixed point set $\mathfrak F_\phi=\{s\in\M_d(\mathbb C)\,|\,\phi(s)=s\}$
of $\phi$ is an operator system that contains the Weyl unitaries
$\omega_a=\mathfrak u$ and $\omega_c=\mathfrak v$; because $\{\mathfrak u, \mathfrak v\}'=\mathbb C\,1_d$, the
operator system $\mathfrak F_\phi$ is irreducible.

Let $\psi:\mathfrak F_\phi\rightarrow\M_p(\mathbb C)$ be the ucp map $\psi(s)=s$, for all $s\in \mathfrak F_\phi$.
Thus, $\phi$ is a ucp extension of $\psi$ from $\mathfrak F_\phi$ to $\M_p(\mathbb C)$. However, as 
$\mathfrak F_\phi$ is irreducible, $\psi$ has unique completely positive extension to $\M_p(\mathbb C)$, 
by Arveson's Boundary Theorem \cite{arveson1972,farenick2011b}.
Therefore, $\phi$ can only be the identity map, as the identity map on $\M_d(\mathbb C)$ is one ucp extension of $\psi$. 
However, as $\phi(\omega_a)=x\not=\omega_a$,
$\phi$ is not the identity map.
Hence, this contradiction leads us to conclude that
$\phi$ is not completely positive.
\end{proof}

 \begin{corollary}\label{neg result} The Weyl-Brauer unitaries are not universal for $g$-tuples (where $g\geq 3$)
of $p$-th order unitaries that satisfy the simple Weyl commutation relations.
\end{corollary}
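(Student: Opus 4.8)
The plan is to deduce the corollary from Proposition \ref{three is bad}, propagating the obstruction it records to larger $g$. For $g=3$ there is nothing to prove: the triple $(x,y,z)$ of Proposition \ref{three is bad} is a $p$-th order $3$-tuple obeying the simple Weyl commutation relations which is not $\phi(\mathfrak W)$ for any ucp map $\phi$, and that is precisely the failure of universality for triples. So the task is to produce, for each larger $g$, a $p$-th order $g$-tuple with the simple Weyl commutation relations that is not a ucp image of the standard Weyl-Brauer $g$-tuple of Section 3.

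For odd $g=2k+1$ I would use the \emph{mechanism} of Proposition \ref{three is bad} rather than its explicit matrix $y$. Write the Weyl-Brauer $g$-tuple of Section 3 as $\mathcal Q_k=(u_1,\dots,u_{2k+1})$, so that $u_{2k+1}=\bigotimes_1^k\omega_c$, and put $\mathcal T=(u_1,\dots,u_{2k},\,\zeta u_{2k+1})$. Multiplying one entry by the $p$-th root of unity $\zeta$ keeps that entry a $p$-th order unitary and leaves every relation $u_iu_\ell=\zeta u_\ell u_i$ with $i<\ell$ intact, so $\mathcal T$ is again a $p$-th order $g$-tuple satisfying the simple Weyl commutation relations, differing from $\mathcal Q_k$ only in its last coordinate. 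Suppose some ucp map $\phi:\mathcal O_{\mathcal Q_k}\to\M_{p^k}(\mathbb C)$ sends $\mathcal Q_k$ to $\mathcal T$ coordinatewise, and extend it to a ucp self-map $\Phi$ of $\M_{p^k}(\mathbb C)$ by the Arveson Extension Theorem \cite{arveson1969,Paulsen-book}. Then $\Phi$ fixes $u_1,\dots,u_{2k}$, hence fixes the operator system they span together with $1$ and adjoints. By the computation of Section 3 the set $\mathcal Q_{k,-}=\{u_1,\dots,u_{2k}\}$ generates $\bigotimes_1^k\M_p(\mathbb C)$, so that operator system has trivial commutant and is irreducible; Arveson's Boundary Theorem \cite{arveson1972,farenick2011b} therefore forces $\Phi$ to be the identity map, contradicting $\Phi(u_{2k+1})=\zeta u_{2k+1}\ne u_{2k+1}$. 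Hence $\mathcal Q_k$ is not universal for these $g$. (The same argument with $k=1$ recovers the $g=3$ case as well.)

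Beyond Proposition \ref{three is bad} and the two Arveson theorems, the only fact doing real work is the generation statement $\mathrm{Alg}(\mathcal Q_{k,-})=\bigotimes_1^k\M_p(\mathbb C)$, which Section 3 already supplies; the rest is bookkeeping. The subtle point, which I expect to be the main obstacle, is the even-cardinality tuples $\mathcal Q_{k,-}$: there, rescaling a single coordinate will not suffice, since the remaining coordinates then generate only a proper subalgebra of $\bigotimes_1^k\M_p(\mathbb C)$ and the irreducibility-plus-boundary-theorem step breaks down. For those $g$ I would instead feed the non-scalar obstruction $(\omega_a,y,\omega_c)$ of Proposition \ref{three is bad} through the iterative tensor construction of Section 3 --- replacing $\omega_b$ by $y$ in the seed, which alters only one coordinate of the output --- and then control the putative ucp map through the multiplicative domain of the C$^*$-subalgebra generated by the unchanged coordinates, the crux being to rule out the nontrivial $*$-automorphisms that this (now reducible) subalgebra permits.
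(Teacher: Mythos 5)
Your treatment of $g=3$ and of odd $g=2k+1$ is correct, and it is a genuinely different route from the paper's. The paper disposes of every $g>3$ in one sentence, by asserting that universality for $g$-tuples would entail universality for $3$-tuples and then citing Proposition \ref{three is bad}; you instead manufacture an explicit counterexample at each odd level by multiplying the last coordinate of $\mathcal Q_k$ by $\zeta$, and then run the same rigidity argument (irreducibility of the fixed operator system plus Arveson's boundary theorem) that the paper uses inside Proposition \ref{three is bad}. Since $\mbox{\rm Alg}(\mathcal Q_{k,-})=\bigotimes_1^k\M_p(\mathbb C)$ is established in Section 3, your extension $\Phi$ does fix an irreducible operator system and must be the identity, contradicting $\Phi(u_{2k+1})=\zeta u_{2k+1}$. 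That part is complete and, for odd $g$, arguably more explicit than the paper's asserted reduction.

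The genuine gap is the even case $g=2k\geq 4$, which the corollary also claims and which your proposal only plans rather than proves. You correctly diagnose why the rescaling trick fails there: deleting any one element of $\mathcal Q_{k,-}$ leaves a generating set for a proper, reducible subalgebra (for $k=2$, for instance, $\{\omega_a\otimes 1,\omega_b\otimes 1,\omega_c\otimes\omega_a\}$ generates only $\M_p(\mathbb C)\otimes\mbox{\rm Alg}(\omega_a)$). But the substitute you sketch---seeding the tensor construction with $(\omega_a,y,\omega_c)$ and controlling the putative ucp map through its multiplicative domain---is left at the level of intention, and the obstacle is real: for $g=4$ the three unchanged coordinates $\omega_a\otimes 1$, $\omega_c\otimes\omega_a$, $\omega_c\otimes\omega_b$ generate a subalgebra whose commutant contains the nonscalar unitary $\mathfrak u\otimes\mathfrak v$, so the boundary theorem does not apply, and one can check that the bimodule identities forced by the multiplicative domain are actually consistent with sending $\omega_b\otimes 1$ to $y\otimes 1$. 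So the multiplicative-domain argument, as sketched, does not close the case either. The paper sidesteps the parity issue entirely by reducing all $g>3$ to $g=3$; since you neither carry out that reduction nor complete your alternative for even $g$, the corollary as stated is not fully established by your proposal.
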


\begin{proof} If universality were to hold for some $g>3$, then it would need to hold for $g=3$. However, Proposition
\ref{three is bad} indicates
that universality fails for $g=3$.
\end{proof}

\section{The Matrix Range of the Weyl Unitaries}

The work of Arveson \cite{arveson1972} and 
Davidson, Dor-On, Shalit, and Solel \cite{davisdon--dor-on--shalit--solel2017}
demonstrate the role of the matrix range in questions such as those we have considered herein.
For this reason, it is of interest to consider the matrix range of the Weyl unitaries, especially in connection
with the geometry of the matrix range from the perspective of 
free convexity \cite{evert--helton--klep--mccullough2018}.

\begin{definition}
A sequence $K=\left(K_n\right)_{n\in\mathbb N}$ of subsets $K_n$
in the Cartesian product $\M_n(\mathbb C)^g$ of $g$ copies of $\M_n(\mathbb C)$ is 
\emph{matrix convex} if   
\begin{equation}\label{e:mc}
\sum_{j=1}^m \gamma_j^*a_j\gamma_j \in K_n  
\end{equation}
for all $m\in\mathbb N$, all $a_j\in K_{n_j}$, and all linear transformations $\gamma_j:\mathbb C^n\rightarrow\mathbb C^{n_j}$ for which
\begin{equation}\label{e:mc2}
\sum_{j=1}^m \gamma_j^* \gamma_j =1_n.
\end{equation}
Linear transformations $\gamma_j$ that satisfy (\ref{e:mc2}) are called \emph{matrix convex coefficients} and elements of the form 
(\ref{e:mc}) are called \emph{matrix convex combinations} of the elements $a_j$.
\end{definition}

We are interested in the following notions \cite{evert--helton--klep--mccullough2018} of extremal element in the context of matrix convexity.

\begin{definition} If $K=\left(K_n\right)_{n\in\mathbb N}$ is matrix convex, where $K_n\subseteq\M_n(\mathbb C)^g$ 
for each $n$, then an element $b\in K_n$ is:
\begin{enumerate}
\item an \emph{absolute extreme point} of $K$ if whenever $b$ is a matrix convex combination
(\ref{e:mc}) of elements $a_j\in K_{n_j}$ such that each matrix convex coefficient $\gamma_j$ is nonzero, then, for each $j$, either (i)
$n_j=n$ and $a_j\simeq_{\rm u}b$ or (ii) $n_j>n$ and there exists a $c_j$ such that $a_j\simeq_{\rm u}b\oplus c_j$;
\item a \emph{matrix extreme point} of $K$ if whenever $b$ is a matrix convex combination
(\ref{e:mc}) of elements $a_j\in K_{n_j}$ such that each matrix convex coefficient $\gamma_j$ is surjective, then, for each $j$, $n_j=n$ and $a_j\simeq_{\rm u}b$.
\end{enumerate}
\end{definition}

\begin{theorem}\label{matrix extreme point} 
The Weyl pair $(\mathfrak u,\mathfrak v)$ is a matrix extreme point of its matrix range.
\end{theorem}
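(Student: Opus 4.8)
The plan is to use the characterization of matrix extreme points in terms of irreducibility together with the algebraic structure of $(\mathfrak u,\mathfrak v)$ established in Corollary \ref{weyl irr}. Suppose $(\mathfrak u,\mathfrak v)$ is written as a matrix convex combination $\mathfrak u = \sum_{j=1}^m \gamma_j^* a_j \gamma_j$, $\mathfrak v = \sum_{j=1}^m \gamma_j^* b_j \gamma_j$, where each $(a_j,b_j)\in W^{n_j}(\mathfrak u,\mathfrak v)$ and each $\gamma_j\colon\mathbb C^p\to\mathbb C^{n_j}$ is surjective, with $\sum_j \gamma_j^*\gamma_j = 1_p$. I must show $n_j=p$ and $(a_j,b_j)\simeq_{\rm u}(\mathfrak u,\mathfrak v)$ for each $j$. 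First I would observe that since each pair $(a_j,b_j)$ lies in the matrix range of $(\mathfrak u,\mathfrak v)$, it is the image of $(\mathfrak u,\mathfrak v)$ under a ucp map, hence $a_j,b_j$ are $p$-th order unitaries; but actually I need to be careful — a ucp image of a unitary need not be unitary. The correct route is: the matrix range $W(\mathfrak u,\mathfrak v)$ coincides with $W(u',v')$ for any $p$-th order unitary pair satisfying the Weyl relation (Corollary \ref{pairs2}), and by Corollary \ref{main result Weyl} the elements of $W^{n}(\mathfrak u,\mathfrak v)$ are exactly the ucp images; combined with Theorem \ref{pairs}, elements of the matrix range that happen to consist of $p$-th order unitaries satisfying $uv=\zeta vu$ are precisely the ones unitarily equivalent into $\mathfrak u\oplus\cdots$.

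The key mechanism is a standard dilation-theoretic argument (as in Arveson's boundary theory and the Davidson–Dor-On–Shalit–Solel framework). Form the ucp map $\psi\colon\M_p(\mathbb C)\to\bigoplus_j \M_{n_j}(\mathbb C)$, $\psi(x) = \bigoplus_j a_j$-type block, i.e. the map sending $\mathfrak u\mapsto\bigoplus a_j$, $\mathfrak v\mapsto\bigoplus b_j$, extended by Arveson to $\M_p(\mathbb C)$. The matrix convex combination identity says that compressing by $\Gamma = [\gamma_1;\cdots;\gamma_m]\colon\mathbb C^p\to\bigoplus\mathbb C^{n_j}$ (an isometry, by (\ref{e:mc2})) recovers $\mathfrak u$ and $\mathfrak v$: $\Gamma^*\psi(\mathfrak u)\Gamma = \mathfrak u$, $\Gamma^*\psi(\mathfrak v)\Gamma = \mathfrak v$. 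Then the composition $\Gamma^*\psi(\cdot)\Gamma\colon\M_p(\mathbb C)\to\M_p(\mathbb C)$ is a ucp map fixing $\mathfrak u$ and $\mathfrak v$, hence fixing the irreducible operator system $\mathcal O_{(\mathfrak u,\mathfrak v)}$ (irreducible by Corollary \ref{weyl irr}). By Arveson's Boundary Theorem, this forces $\Gamma^*\psi(\cdot)\Gamma = \id_{\M_p(\mathbb C)}$, which is a pure/extreme situation: a ucp map that is the identity after compression by an isometry must already be multiplicative on a large piece. Concretely, the identity being a compression of $\psi$ forces $\psi$ itself to be (unitarily equivalent to) the identity direct-summed with something — more precisely, it forces $\Gamma\Gamma^*$ to commute with $\psi(x)$ for all $x$, so $\psi$ decomposes as $\id\oplus(\text{something})$ relative to the range of $\Gamma$.

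From $\Gamma\Gamma^*\in\psi(\M_p(\mathbb C))'$ and the block structure, each $\gamma_j\gamma_j^*\in\{a_j,b_j\}'$; since $\gamma_j$ is surjective, $\gamma_j\gamma_j^*$ is invertible, and rescaling $\gamma_j$ one sees $\{a_j,b_j\}'$ must contain an invertible positive operator whose compression structure matches — pushing through, $n_j = p$ and $\gamma_j$ is a scalar multiple of a unitary, whence $(a_j,b_j) = \gamma_j^*(\mathfrak u,\mathfrak v)\gamma_j/\|\gamma_j\|^2$-type, i.e. $(a_j,b_j)\simeq_{\rm u}(\mathfrak u,\mathfrak v)$. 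The step I expect to be the main obstacle is making the passage from "$\Gamma^*\psi(\cdot)\Gamma$ is the identity" to "each $\gamma_j$ is (a scalar times) a unitary and $n_j=p$" fully rigorous: one needs that a surjective $\gamma_j\colon\mathbb C^p\to\mathbb C^{n_j}$ appearing nontrivially in such a decomposition, with $\gamma_j\gamma_j^*$ forced into the (trivial) commutant $\{a_j,b_j\}'$, must be square and invertible. The cleanest way is probably to invoke that absolute/matrix extreme points of matrix ranges are detected by boundary representations (Arveson), note that $\id_{\M_p}$ is the unique boundary representation of $\mathcal O_{(\mathfrak u,\mathfrak v)}$ by irreducibility and finite-dimensionality, and cite the Davidson–Dor-On–Shalit–Solel correspondence between matrix extreme points of $W(\mathfrak u,\mathfrak v)$ and boundary representations; this collapses the calculation into a clean appeal and avoids the hand computation entirely.
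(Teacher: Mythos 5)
Your proposal follows the paper's proof exactly through its main mechanism: assemble the blocks into a single ucp map on $\M_p(\mathbb C)$ (your $\Gamma^*\psi(\cdot)\Gamma$ is the paper's $\Phi=\sum_j\gamma_j^*\Psi_j\gamma_j$), observe it fixes the irreducible operator system $\mathcal O_{(\mathfrak u,\mathfrak v)}$, and invoke Arveson's Boundary Theorem to conclude it is $\id_{\M_p(\mathbb C)}$. The divergence, and the gap, is in the final step. The paper finishes by citing the structure theorem for matrix extreme points of matrix state spaces: since $\id_{\M_p(\mathbb C)}$ is a pure matrix state, it is matrix extreme in $\ucp(\M_p(\mathbb C),\M_p(\mathbb C))$, which immediately forces $n_j=p$ and $\Psi_j=w_j^*(\cdot)w_j$. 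You instead attempt a direct computation, and the chain you write down does not close: knowing $\gamma_j\gamma_j^*\in\{a_j,b_j\}'$ and invertible tells you nothing about $n_j$, because $\{a_j,b_j\}'$ is not known to be trivial a priori (the $a_j,b_j$ need not be unitaries, as you yourself note). Your fallback citation is also off: the correspondence with finite-dimensional boundary representations in the Arveson/DDSS/Evert--Helton--Klep--McCullough circle of ideas is with \emph{absolute} extreme points, not matrix extreme points, so it cannot be invoked in the form you state.

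The fix is short and uses the half of the multiplicative-domain argument you did not exploit. Since $\Gamma^*\psi(x)\Gamma=x$ is a $*$-homomorphism and $\Gamma$ is an isometry, the Schwarz inequality gives the intertwining relations $\psi(x)\Gamma=\Gamma x$ and $\Gamma^*\psi(x)=x\Gamma^*$ for all $x\in\M_p(\mathbb C)$; in block form, $\Psi_j(x)\gamma_j=\gamma_j x$. Multiplying on the left by $\gamma_j^*$ yields $\gamma_j^*\gamma_j\,x=\gamma_j^*\Psi_j(x)\gamma_j=x\,\gamma_j^*\gamma_j$, so $\gamma_j^*\gamma_j\in\M_p(\mathbb C)'=\mathbb C 1_p$, i.e.\ $\gamma_j=\lambda_j^{1/2}w_j^*$ for an isometry $w_j^*\colon\mathbb C^p\to\mathbb C^{n_j}$ and $\lambda_j>0$; surjectivity of $\gamma_j$ then forces $n_j=p$ and $w_j$ unitary, and $\Psi_j(x)\gamma_j=\gamma_j x$ becomes $\Psi_j(x)=w_j^*xw_j$, giving $a_j\simeq_{\rm u}(\mathfrak u,\mathfrak v)$. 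With that paragraph added your argument is complete and is a self-contained, citation-free replacement for the paper's appeal to the pure-matrix-state result.
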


\begin{proof} 
Let $b=(\mathfrak u,\mathfrak v)$ and suppose that $b=\displaystyle\sum_{j=1}^m \gamma_j^*a_j\gamma_j$
for some surjective matrix convex coefficients $\gamma_j$ and matrix pairs $a_j=(a_{j1},a_{j2})\in W^{n_j}(b)$. For each $j$ there is a 
ucp map $\psi_j:\mathcal O_{(\mathfrak u, \mathfrak v)}\rightarrow \M_{n_j}(\mathbb C)$ such that $a_{j1}=\psi_j(\mathfrak u)$ and
$a_{j2}=\psi_j(\mathfrak v)$. Let $\Psi_j$ be a ucp extension of $\psi_j$ to a ucp map $\Psi_j:\M_p(\mathbb C)\rightarrow\M_{n_j}(\mathbb C)$, for each $j$, and let
$\Phi:\M_p(\mathbb C)\rightarrow\M_p(\mathbb C)$ be given by
\[
\Phi=\displaystyle\sum_{j=1}^m \gamma_j^*\Psi_j\gamma_j.
\]
Note that $\Phi_{\vert\mathcal O_{(\mathfrak u, \mathfrak v)}}$ is the identity map on $\mathcal O_{(\mathfrak u, \mathfrak v)}$; hence, in considering the 
identity map in $\mathcal O_{(\mathfrak u, \mathfrak v)}$
as a ucp map from $\mathcal O_{(\mathfrak u, \mathfrak v)}$ into $\M_p(\mathbb C)$, $\Phi$ is a ucp extension of that map. Because
the operator system $\mathcal O_{(\mathfrak u, \mathfrak v)}$ is irreducible, Arveson's Boundary Theorem \cite{arveson1972,farenick2011b}
implies that $\Phi$ is the identity map on $\M_p(\mathbb C)$.
That is,
\[
\mbox{\rm id}_{\M_p(\mathbb C)}=\displaystyle\sum_{j=1}^m \gamma_j^*\Psi_j\gamma_j.
\]
Now because the identity map of $\M_p(\mathbb C)$ is a pure matrix state of $\M_p(\mathbb C)$, it is also a matrix extreme point of its matrix state space \cite{farenick2000}. Hence, 
$n_j=p$ for every $j$ and there are unitaries $w_j\in\M_p(\mathbb C)$ such that $\Psi_j(x)=w_j^*xw_j$ for every $x\in\M_p(\mathbb C)$. In particular, $\mathfrak u = w_j^*a_{j1}w_j$ and 
$\mathfrak v = w_j^*a_{j2}w_j$ for all $j$, which yields $a_j\simeq_{\rm u}(\mathfrak u, \mathfrak v)=b$ for all $j$.
\end{proof}
 
\begin{theorem}\label{absolute extreme point} The Weyl pair $(\mathfrak u,\mathfrak v)$ is an absolute extreme point of its matrix range.
\end{theorem}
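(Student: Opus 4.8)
The plan is to follow the proof of Theorem~\ref{matrix extreme point} as far as it goes, and then, at the point where that argument invoked the matrix-extremality of ${\rm id}_{\M_p(\mathbb C)}$ in the matrix state space of $\M_p(\mathbb C)$, to replace it by the stronger fact that ${\rm id}_{\M_p(\mathbb C)}$ is a \emph{pure} unital completely positive map, together with a Kadison--Schwarz rigidity argument.

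So, suppose that $b=(\mathfrak u,\mathfrak v)=\sum_{j=1}^m\gamma_j^*a_j\gamma_j$ is a matrix convex combination in which every matrix convex coefficient $\gamma_j\colon\mathbb C^p\to\mathbb C^{n_j}$ is nonzero, $\sum_{j=1}^m\gamma_j^*\gamma_j=1_p$, and $a_j=(a_{j1},a_{j2})\in W^{n_j}(b)$. Exactly as in the proof of Theorem~\ref{matrix extreme point}, choose ucp maps $\psi_j\colon\mathcal O_{(\mathfrak u,\mathfrak v)}\to\M_{n_j}(\mathbb C)$ with $\psi_j(\mathfrak u)=a_{j1}$ and $\psi_j(\mathfrak v)=a_{j2}$, extend each (Arveson Extension Theorem) to a ucp map $\Psi_j\colon\M_p(\mathbb C)\to\M_{n_j}(\mathbb C)$, and put $\Phi=\sum_{j=1}^m\gamma_j^*\Psi_j\gamma_j\colon\M_p(\mathbb C)\to\M_p(\mathbb C)$. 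Then $\Phi$ is ucp and restricts to the identity on $\mathcal O_{(\mathfrak u,\mathfrak v)}$; since this operator system is irreducible (Corollary~\ref{weyl irr}) and generates $\M_p(\mathbb C)$, Arveson's Boundary Theorem \cite{arveson1972,farenick2011b} forces $\Phi={\rm id}_{\M_p(\mathbb C)}$. Hence ${\rm id}_{\M_p(\mathbb C)}=\sum_{j=1}^m\Omega_j$, where each $\Omega_j:=\gamma_j^*\Psi_j(\cdot)\gamma_j$ is completely positive.

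The next, and key, step is to exploit that the Choi matrix of ${\rm id}_{\M_p(\mathbb C)}$ is a rank-one positive matrix (a multiple of the projection onto the maximally entangled vector). Since Choi matrices add and each $\Omega_j$ is completely positive, the Choi matrix of each $\Omega_j$ is dominated by a rank-one positive matrix, hence is a nonnegative scalar multiple of it; therefore $\Omega_j=t_j\,{\rm id}_{\M_p(\mathbb C)}$ with $t_j\ge 0$ and $\sum_j t_j=1$. Evaluating at $1_p$ gives $\gamma_j^*\gamma_j=t_j1_p$, so $t_j>0$ (because $\gamma_j\ne 0$) and $w_j:=t_j^{-1/2}\gamma_j$ is an isometry with $w_j^*\Psi_j(x)w_j=x$ for all $x\in\M_p(\mathbb C)$. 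Writing $P_j=w_jw_j^*$, one compresses the Kadison--Schwarz inequality $\Psi_j(x^*x)\ge\Psi_j(x)^*\Psi_j(x)$ by $P_j$: the chain $w_jx^*xw_j^*=P_j\Psi_j(x^*x)P_j\ge P_j\Psi_j(x)^*\Psi_j(x)P_j\ge(P_j\Psi_j(x)P_j)^*(P_j\Psi_j(x)P_j)=w_jx^*xw_j^*$ shows all the inequalities are equalities, which forces $(1-P_j)\Psi_j(x)P_j=0$ for every $x$, and, taking adjoints, $P_j\Psi_j(x)(1-P_j)=0$ as well. Conjugating by a unitary that carries $w_j$ to the inclusion $\mathbb C^p\hookrightarrow\mathbb C^p\oplus\mathbb C^{n_j-p}$, this says $\Psi_j\simeq_{\rm u}{\rm id}_{\M_p(\mathbb C)}\oplus\Psi_j'$ for some ucp map $\Psi_j'\colon\M_p(\mathbb C)\to\M_{n_j-p}(\mathbb C)$.

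Evaluating this unitary equivalence at $\mathfrak u$ and $\mathfrak v$ yields $a_j\simeq_{\rm u}(\mathfrak u,\mathfrak v)\oplus c_j$, where $c_j=(\Psi_j'(\mathfrak u),\Psi_j'(\mathfrak v))$; when $n_j=p$ the summand $c_j$ is vacuous and $a_j\simeq_{\rm u}b$, and when $n_j>p$ this is precisely alternative~(ii) in the definition of an absolute extreme point. The main obstacle is the rigidity argument of the previous paragraph: one must upgrade the identity $\gamma_j^*\Psi_j(\cdot)\gamma_j=t_j\,{\rm id}_{\M_p(\mathbb C)}$ from a statement about a compression of $\Psi_j$ to the statement that $\Psi_j$ itself contains a reducing copy of the identity representation. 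The rank-one Choi matrix handles the scalar reduction and Kadison--Schwarz supplies the splitting; everything else is the bookkeeping already present in the proof of Theorem~\ref{matrix extreme point}.
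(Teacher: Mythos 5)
Your proof is correct, but it takes a genuinely different route from the paper's. The paper argues in two steps: first, a short dilation computation shows that $(\mathfrak u,\mathfrak v)$ admits no nontrivial dilation inside its matrix range (every row and column of $\mathfrak u$ and $\mathfrak v$ already has norm one, and a ucp image of a unitary is a contraction, so any block dilation $\left[\begin{smallmatrix}\mathfrak u & r_1\\ s_1 & t_1\end{smallmatrix}\right]$, $\left[\begin{smallmatrix}\mathfrak v & r_2\\ s_2 & t_2\end{smallmatrix}\right]$ lying in $W^{p+\ell}(b)$ must have $r_i=s_i=0$); this says the Weyl pair is an \emph{Arveson} extreme point, and the paper then invokes Theorem 1.1(3) of \cite{evert--helton--klep--mccullough2018}, which states that an irreducible Arveson extreme point is an absolute extreme point. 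You instead verify the definition of absolute extreme point directly: after the Arveson Boundary Theorem step (shared with the proof of Theorem \ref{matrix extreme point}) reduces everything to a decomposition $\mathrm{id}_{\M_p(\mathbb C)}=\sum_j\gamma_j^*\Psi_j(\cdot)\gamma_j$ into completely positive summands, the rank-one Choi matrix of the identity forces each summand to be $t_j\,\mathrm{id}$, and your Schwarz-inequality/multiplicative-domain computation correctly upgrades ``the compression of $\Psi_j$ to $\mathrm{ran}(\gamma_j)$ is the identity'' to ``$\mathrm{ran}(\gamma_j)$ reduces $\Psi_j$,'' giving $a_j\simeq_{\rm u}b\oplus c_j$ (with the case $n_j<p$ excluded automatically because $t_j^{-1/2}\gamma_j$ is an isometry). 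What your approach buys is self-containment: you never need the Evert--Helton--Klep--McCullough theorem, and in effect you re-prove the special case of it that the paper cites, at the cost of a longer argument; what the paper's approach buys is brevity and a cleaner conceptual split (Arveson extremality is checked by an elementary norm argument, irreducibility by Corollary \ref{weyl irr}, and the heavy lifting is outsourced). Both arguments are valid.
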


\begin{proof} Let $b=(\mathfrak u,\mathfrak v)$ and suppose that, for some $\ell\in\mathbb N$, the pair 
\[
(a_1,a_2)=
\left( \left[\begin{array}{cc} \mathfrak u & r_1 \\ s_1 & t_1\end{array}\right], \left[\begin{array}{cc} \mathfrak v & r_2 \\ s_2 & t_2\end{array}\right]\right)
\in W^{p+\ell}(b),
\]
for some matrices $r_i,s_i, t_i$ of appropriate sizes. We claim that the matrix pair above $(a_1,a_2)$
can be an element of $W^{p+\ell}(b)$ only if the off-diagonal
matrices $r_i$ and $s_i$ are zero, for $i=1,2$. The reason for this is straightforward. Because each $a_i$ is a ucp image of a Weyl unitary, 
$\|a_i\|\leq 1$ for $i=1,2$. Therefore, no row or column in $a_i$ can have norm (in $\mathbb C^{p+\ell}$) exceeding 1. However, because each
row of $\mathfrak u$ and $\mathfrak v$ has exactly one nonzero entry and this entry is of modulus 1, a nonzero entry in $r_1$ or $r_2$ would 
cause a row in $a_1$ or $a_2$ to have norm exceeding 1, which we noted cannot happen. Thus, $r_1$ and $r_2$ are zero matrices.
Using a similar argument for the columns, 
we deduce that $s_1$ and $s_2$ are also zero matrices.

In the language of matrix convexity, the previous paragraph proves that the 
pair $(\mathfrak u, \mathfrak v)$ is an Arveson extreme 
point of its matrix range $W(b)$. By \cite[Theorem 1.1(3)]{evert--helton--klep--mccullough2018}, if an Arveson extreme point of a matrix convex set
is irreducible, then it is an absolute extreme point. Since the commutant $\{\mathfrak u,\mathfrak v\}'$ is $1$-dimensional, the Weyl pair is irreducible and, hence,
an absolute extreme point of its matrix range.
\end{proof}

The proofs of Theorems \ref{matrix extreme point} and \ref{absolute extreme point} 
extend beyond Weyl pairs to all Weyl-Brauer unitaries, once it has
been shown that the Weyl-Brauer matrices generate irreducible operator systems. 
The details are left to the reader (using, if one wishes, the method of proof in 
\cite{farenick--huntinghawk--masanika--plosker2021} that showed the irreducibility 
of the operator system generated by the Pauli-Weyl-Brauer unitaries). However, 
at the very least, we state below the version of this theorem for the three basic 
Weyl-Brauer unitaries $\omega_a$, $\omega_b$, and $\omega_c$.

\begin{theorem} If $\mathfrak W=(\omega_a,\omega_b,\omega_c)$ is the Weyl-Brauer triple, then $\mathfrak W$ is a matrix extreme point and an absolute
extreme point of its matrix range.
\end{theorem}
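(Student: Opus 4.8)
The plan is to transcribe, almost verbatim, the two arguments already given for the Weyl pair in the proofs of Theorems \ref{matrix extreme point} and \ref{absolute extreme point}; the only genuinely new input needed is that the operator system $\mathcal O_{\mathfrak W}$ is irreducible and that the identity representation of $\M_p(\mathbb C)$ restricts to a boundary representation of $\mathcal O_{\mathfrak W}$. Both are quick. Since $\omega_a=\mathfrak u$ and $\omega_c=\mathfrak v$ already lie in $\mathcal O_{\mathfrak W}$, the commutant satisfies $\mathcal O_{\mathfrak W}'\subseteq\{\mathfrak u,\mathfrak v\}'=\mathbb C\,1_p$ by Corollary \ref{weyl irr}, so $\mathcal O_{\mathfrak W}$ is irreducible. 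Moreover $\mbox{\rm Alg}(\{\omega_a,\omega_b,\omega_c\})=\M_p(\mathbb C)$ (indeed $\mbox{\rm Alg}(\{\omega_a,\omega_b\})=\M_p(\mathbb C)$, as observed at the start of Section 3), and since the unique-extension property of the inclusion $\mathcal O_{(\mathfrak u,\mathfrak v)}\hookrightarrow\M_p(\mathbb C)$ is already available, the same holds for the larger system: the identity map of $\M_p(\mathbb C)$ is the unique ucp extension of the inclusion $\mathcal O_{\mathfrak W}\hookrightarrow\M_p(\mathbb C)$, by Arveson's Boundary Theorem \cite{arveson1972,farenick2011b}.

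For the matrix extreme point assertion, suppose $\mathfrak W=\sum_{j=1}^m\gamma_j^*a_j\gamma_j$ with surjective matrix convex coefficients $\gamma_j$ and $a_j=(a_{j1},a_{j2},a_{j3})\in W^{n_j}(\mathfrak W)$. Write $a_j=(\psi_j(\omega_a),\psi_j(\omega_b),\psi_j(\omega_c))$ for a ucp map $\psi_j\colon\mathcal O_{\mathfrak W}\to\M_{n_j}(\mathbb C)$, extend $\psi_j$ to a ucp map $\Psi_j\colon\M_p(\mathbb C)\to\M_{n_j}(\mathbb C)$ by the Arveson Extension Theorem, and set $\Phi=\sum_{j=1}^m\gamma_j^*\Psi_j\gamma_j\colon\M_p(\mathbb C)\to\M_p(\mathbb C)$. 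Then $\Phi$ fixes $\mathcal O_{\mathfrak W}$ pointwise, so by the boundary-theorem observation $\Phi=\mathrm{id}_{\M_p(\mathbb C)}$. Because the identity map of $\M_p(\mathbb C)$ is a pure, hence matrix extreme, point of the matrix state space of $\M_p(\mathbb C)$ \cite{farenick2000}, each $n_j=p$ and each $\Psi_j$ is a unitary conjugation $x\mapsto w_j^*xw_j$; therefore $a_j\simeq_{\rm u}\mathfrak W$ for every $j$, as required.

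For the absolute extreme point assertion, the key point is that $\omega_a,\omega_b,\omega_c$ are monomial matrices with unimodular nonzero entries: $\omega_a=\mathfrak u$ is diagonal, $\omega_c=\mathfrak v$ is a cyclic shift, and $\omega_b=\zeta^{(p-1)/2}\mathfrak u\mathfrak v$ has $(i,j)$-entry $\zeta^{(p-1)/2}\zeta^{i-1}(\mathfrak v)_{ij}$, so it too has exactly one nonzero entry, of modulus $1$, in each row and each column. Consequently, if $(a_1,a_2,a_3)\in W^{p+\ell}(\mathfrak W)$ is written in block form with upper-left corners $\omega_a,\omega_b,\omega_c$, then $\|a_i\|\le 1$ forces every row and column of $a_i$ meeting an off-diagonal block to attain norm $1$ already within the $\omega$-corner, so the off-diagonal blocks vanish, exactly as in the proof of Theorem \ref{absolute extreme point}. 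Thus $\mathfrak W$ is an Arveson extreme point of its matrix range, and since $\mathcal O_{\mathfrak W}$ is irreducible, \cite[Theorem 1.1(3)]{evert--helton--klep--mccullough2018} yields that $\mathfrak W$ is an absolute extreme point (which in any case subsumes the matrix extreme conclusion).

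There is no serious obstacle here beyond bookkeeping; the one step that deserves care is the boundary-representation claim, i.e.\ that Arveson's Boundary Theorem genuinely applies to $\mathcal O_{\mathfrak W}$. This rests on the two facts $\mathcal O_{\mathfrak W}'=\mathbb C\,1_p$ and $\mbox{\rm Alg}(\mathcal O_{\mathfrak W})=\M_p(\mathbb C)$, both of which follow from the results of Sections 2 and 3; once these are in place, the rest is a routine adaptation of the two earlier proofs. A secondary item worth double-checking explicitly is the one-line computation that $\omega_b$ is monomial with unimodular entries, since the whole absolute-extreme argument hinges on it.
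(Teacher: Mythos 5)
Your proposal is correct and follows exactly the route the paper intends: it transcribes the proofs of Theorems \ref{matrix extreme point} and \ref{absolute extreme point} to the triple, supplying the only two missing ingredients, namely the irreducibility of $\mathcal O_{\mathfrak W}$ and the observation that $\omega_b$ is a monomial matrix with unimodular entries. Your irreducibility argument---that $\mathcal O_{\mathfrak W}\supseteq\mathcal O_{(\mathfrak u,\mathfrak v)}$ forces $\mathcal O_{\mathfrak W}'\subseteq\{\mathfrak u,\mathfrak v\}'=\mathbb C\,1_p$---is even a bit more direct than the method the paper suggests importing from \cite{farenick--huntinghawk--masanika--plosker2021}.
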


\section*{Acknowledgements}
This work was supported, in part, by the 
NSERC Discovery Grant program, the 
Canada Foundation for Innovation, and the Canada Research Chairs program.

\bibliographystyle{amsplain}
\bibliography{doug-refs}

\providecommand{\bysame}{\leavevmode\hbox to3em{\hrulefill}\thinspace}
\providecommand{\MR}{\relax\ifhmode\unskip\space\fi MR }
\providecommand{\MRhref}[2]{%
  \href{http://www.ams.org/mathscinet-getitem?mr=#1}{#2}
}
\providecommand{\href}[2]{#2}
\begin{thebibliography}{10}

\bibitem{arveson1969}
William Arveson, \emph{Subalgebras of {$C\sp{\ast} $}-algebras}, Acta Math.
  \textbf{123} (1969), 141--224. \MR{MR0253059 (40 \#6274)}

\bibitem{arveson1972}
\bysame, \emph{Subalgebras of {$C\sp{\ast} $}-algebras. {II}}, Acta Math.
  \textbf{128} (1972), no.~3-4, 271--308. \MR{MR0394232 (52 \#15035)}

\bibitem{davisdon--dor-on--shalit--solel2017}
Kenneth~R. Davidson, Adam Dor-On, Orr~Moshe Shalit, and Baruch Solel,
  \emph{Dilations, inclusions of matrix convex sets, and completely positive
  maps}, Int. Math. Res. Not. IMRN (2017), no.~13, 4069--4130. \MR{3671511}

\bibitem{evert--helton--klep--mccullough2018}
Eric Evert, J.~William Helton, Igor Klep, and Scott McCullough, \emph{Extreme
  points of matrix convex sets, free spectrahedra, and dilation theory}, J.
  Geom. Anal. \textbf{28} (2018), no.~2, 1373--1408. \MR{3790504}

\bibitem{farenick2000}
Douglas Farenick, \emph{Extremal matrix states on operator systems}, J. London
  Math. Soc. (2) \textbf{61} (2000), no.~3, 885--892. \MR{1766112
  (2001e:46103)}

\bibitem{farenick2011b}
\bysame, \emph{Arveson's criterion for unitary similarity}, Linear Algebra
  Appl. \textbf{435} (2011), no.~4, 769--777. \MR{2807232 (2012d:15002)}

\bibitem{farenick--huntinghawk--masanika--plosker2021}
Douglas Farenick, Farrah Huntinghawk, Adili Masanika, and Sarah Plosker,
  \emph{Complete order equivalence of spin unitaries}, Linear Algebra Appl.
  \textbf{610} (2021), 1--28.

\bibitem{Paulsen-book}
Vern Paulsen, \emph{Completely bounded maps and operator algebras}, Cambridge
  Studies in Advanced Mathematics, vol.~78, Cambridge University Press,
  Cambridge, 2002. \MR{MR1976867 (2004c:46118)}

\bibitem{schwinger1960}
Julian Schwinger, \emph{Unitary operator bases}, Proc. Nat. Acad. Sci. U.S.A.
  \textbf{46} (1960), 570--579. \MR{115648}

\bibitem{Watrous-book}
John Watrous, \emph{The theory of quantum information}, Cambridge University
  Press, Cambridge, 2018.

\bibitem{weyl1927}
H.~Weyl, \emph{Quantenmechanik und {G}ruppentheorie}, Z. Physik A \textbf{46}
  (1927), no.~1, 1--46.

\end{thebibliography}

\end{document}